\newtheorem{theorem}{Theorem}[section]
\newtheorem{lemma}[theorem]{Lemma}
\newtheorem{prop}[theorem]{Proposition}
\theoremstyle{definition}
\newtheorem{definition}[theorem]{Definition}
\newtheorem{sublemma}[theorem]{Sublemma}
\newcommand{\C}{\mathbb{C}}
\newcommand{\N}{\mathbb{N}}
\renewcommand{\P}{\mathbb{P}}
\numberwithin{equation}{section}
\begin{document}
\title[]{Squeezing functions and Cantor Sets.}
\author[L. Arosio]{L. Arosio$^{\dag}$}
\author{J. E. Forn\ae ss$^{\dag\dag}$}
\author{N. Shcherbina}
\author[E. F. Wold]{E. F. Wold$^{\dag\dag\dag}$}
\thanks{$^{\dag\dag}$ and $^{\dag\dag\dag}$ Supported by NRC grant no. 240569 }
\thanks{$^{\dag}$Supported by the SIR grant ``NEWHOLITE - New methods in holomorphic iteration'' no. RBSI14CFME}
\thanks{This work was done during the international research program "Several Complex Variables and Complex Dynamics"
at the Center for Advanced Study at the Academy of Science and Letters in Oslo during the academic year 2016/2017. }
\address{ L. Arosio: Dipartimento Di Matematica\\
Universit\`{a} di Roma \textquotedblleft Tor Vergata\textquotedblright\ \\
Via Della Ricerca Scientifica 1, 00133 \\
Roma, Italy} \email{arosio@mat.uniroma2.it}
\address{J. E. Forn\ae ss: Department of Mathematics, NTNU\\Norway}\email{john.fornass@ntnu.no}
\address{N. Shcherbina: Department of Mathematics\\ University of Wuppertal\\ 42097 Wuppertal\\ Germany}\email{shcherbina@math.uni-wuppertal.de}
\address{E. F. Wold: Department of Mathematics\\
University of Oslo\\
Postboks 1053 Blindern, NO-0316 Oslo, Norway}\email{erlendfw@math.uio.no}

%
%
\subjclass[2010]{32E20}
\date{\today}
\keywords{}

\begin{abstract}
We construct ``large" Cantor sets whose complements resemble the unit disk arbitrarily well from the point of view of the squeezing function,
 and we construct ``large" Cantor sets whose complements do not resemble the unit disk from the point of view of the squeezing function.
Finally we show that complements of Cantor sets arising as Julia sets of quadratic polynomials have degenerate squeezing functions, despite of having 
Hausdorff dimension arbitrarily close to two.
\end{abstract}

\maketitle

\section{Introduction}

Recently there have been many studies of the boundary behaviour of the squeezing function (see Section 2 for the definition and references) in one and several complex variables.
In one complex variable there are two extremes: 
\begin{itemize}
\item[(1)] if $\gamma\subset b\Omega$ is an isolated boundary component of a domain $\Omega$ which is not a point, then 
\begin{equation}
\lim_{\Omega\ni z\rightarrow\gamma}S_\Omega(z)=1, 
\end{equation}
\item[(2)] if $\gamma\subset b\Omega$ is an isolated boundary component of a domain $\Omega$ which is a point, then 
\begin{equation}
\lim_{\Omega\ni z\rightarrow\gamma}S_\Omega(z)=0. 
\end{equation}
\end{itemize}
This suggests studying the boundary behaviour of $S_\Omega(z)$ where $\Omega=\mathbb P^1\setminus K$, and $K$ is a Cantor set.  
In \cite{AhlforsBeurling} Ahlfors and Beurling showed that there exist Cantor sets in $\P^1$ whose complement admits a bounded injective holomorphic function. In particular,  such complements admit a non-degenerate squeezing function, and so this class of domains is nontrivial from the point of view of the squeezing function.  

Our first result is the following.

\begin{theorem}\label{main}
For any $\epsilon>0$ there exists a Cantor set $Q\subset I^2$ with  2-dimensional Lebesgue measure greater than $1-\epsilon$, 
such that 
\begin{equation}
\lim_{\Omega\ni z\rightarrow  Q}S_{\Omega}(z)=1,
\end{equation}
and, moreover, $S_\Omega(z)\geq 1-\epsilon$ for all $z\in\Omega$, where $\Omega=\mathbb P^1\setminus  Q$.
\end{theorem}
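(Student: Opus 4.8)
The plan is to produce $Q$ as a nested intersection $Q=\bigcap_{n\ge 0}Q_n$ of finite unions of closed ``cells''. Take $Q_0$ a closed square inside $I^2$, and, given $Q_n$ a disjoint union of finitely many smoothly bounded Jordan cells, obtain $Q_{n+1}$ by subdividing each cell $T$ of $Q_n$ into $k_n^2$ congruent subcells arranged in a $k_n\times k_n$ grid, separated from one another and from $\partial T$ by thin, smoothly rounded ``walls'' of width $w_{n+1}$. The parameters are chosen so that: (a) $w_{n+1}$ is tiny compared with the sidelength $\sigma_{n+1}\approx\sigma_n/k_n$ of the level-$(n+1)$ cells, and moreover $w_{n+1}\ll w_n$; (b) $k_n$ is large, so that the level-$(n+1)$ cells are in turn tiny compared with the walls $w_n$ of the preceding level; (c) the area removed at step $n+1$ is at most $\epsilon 2^{-n}$ relative to the area of $Q_n$. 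Since each cell has $k_n^2\ge 4$ children whose diameters tend to $0$, $Q$ is a Cantor set; by (c) and a suitable choice of $Q_0$, $|Q|>1-\epsilon$; and $Q\subset I^2$. Write $\Omega=\mathbb{P}^1\setminus Q$, $\Omega_n=\mathbb{P}^1\setminus Q_n$, so that $\Omega=\bigcup_n\Omega_n$ is an increasing union of finitely connected domains; the ``wall domain'' $W_T=T^\circ\setminus\bigcup_i\overline{T_i}$ of a cell $T$ at level $m$ is a finitely connected domain of Euclidean width $\sim w_{m+1}$, and every $\partial T$ lies in the interior of $\Omega$.

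For the estimate fix $z\in\Omega$ and let $n_0=n_0(z)$ be the least $n$ with $z\notin Q_n$; then $z$ lies in the thin tube $W_P$ for a unique level-$(n_0-1)$ cell $P$. I would first bound $S_{\Omega_n}(z)$ from below for all $n\ge n_0$, uniformly in $n$, by producing a conformal map $f_n\colon\Omega_n\to\mathbb{D}$ with $f_n(z)=0$ and $f_n(\Omega_n)\supset(1-\eta_{n_0})\mathbb{D}$. The mechanism is that, viewed from $z$, everything in $\Omega_n$ outside a fixed simply connected neighbourhood $U$ of $z$ in $W_P$ is ``conformally far'': leaving $W_P$ towards coarser scales requires running along the thin tube $W_P$, of Euclidean length $\sim\sigma_{n_0-1}$ and width $\sim w_{n_0}$, hence of large hyperbolic length; and reaching the genuine boundary components of $\Omega_n$, which sit inside the children of $P$, requires crossing the nested ``membranes'' $\partial T_i,\partial T_{ij},\dots$, across each of which the domain abruptly narrows from width $w_m$ to width $w_{m+1}\ll w_m$, costing $\sim\log(w_m/w_{m+1})$, for a total that grows without bound with the level. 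Uniformizing $\Omega_n$ so as to realize this — equivalently, bounding from above the harmonic measure at $z$ of $\Omega_n\setminus U$ — yields $f_n(\Omega_n)\supset(1-\eta_{n_0})\mathbb{D}$, where $\eta_{n_0}\to 0$ depends only on $(k_m,\,w_m/\sigma_m,\,w_{m+1}/w_m)_{m\ge n_0-1}$; by the choice of parameters $\eta_{n_0}\le\epsilon$ for every $z$, and $\eta_{n_0}\to 0$ as $z\to Q$. Since the part of $\Omega_n$ mapped into $(1-\eta_{n_0})\mathbb{D}$ is, by the same harmonic-measure estimate, contained in $\overline{U}$, a fixed compact subset of $\Omega$, a normal families argument gives a subsequential limit $f\colon\Omega\to\mathbb{D}$, injective by Hurwitz, with $f(z)=0$ and $(1-\eta_{n_0})\mathbb{D}\subset f(\Omega)$. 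Hence $S_\Omega(z)\ge 1-\eta_{n_0}\ge 1-\epsilon$, and $S_\Omega(z)\to 1$ as $z\to Q$.

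The step I expect to be the main obstacle is making the ``thin tube'' estimate genuinely quantitative and uniform: one must control, via extremal length or harmonic measure, how the thin finitely connected tube $W_P$ inside $\Omega_n$ — whose holes are not solid but are themselves filled, through narrow membranes, by an entire finer tower of thin tubes and tiny cells — is uniformized onto $\mathbb{D}$, and show that the harmonic measure at $z$ of everything other than $U$ is at most $\eta_{n_0}$ with $\eta_{n_0}$ of the required form. This is where hypotheses (a)–(b) are used crucially at every scale simultaneously, and where one must be careful near the wall crossings inside $W_P$ and near the rounded cell corners (whence the smoothing). Granting that estimate — together with the routine but delicate verification that $f_n^{-1}\big((1-\eta_{n_0})\mathbb{D}\big)$ stays inside $\overline{U}$ uniformly in $n$, so the limit map does not degenerate — the conclusion follows for every $z\in\Omega$, giving both $S_\Omega(z)\ge 1-\epsilon$ on all of $\Omega$ and $\lim_{\Omega\ni z\to Q}S_\Omega(z)=1$.
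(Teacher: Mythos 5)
Your plan differs substantially from the paper's. The paper does not estimate harmonic measure or extremal length at all; instead it introduces the auxiliary function $R_\Omega$ (the squeezing function restricted to circular slit maps), proves a continuity theorem for $R$ under Hausdorff convergence of finitely connected domains (Proposition 2.2), proves a concrete two-disk lemma for domains close to $\mathbb{P}^1\setminus(\overline{B_1(-1)}\cup\overline{B_1(1)})$ (Lemma 2.4), and uses these to propagate a uniform lower bound $R\geq 1-\epsilon$ through an inductive binary-splitting construction, finally transferring the bound to $S_\Omega$ by a one-sided semicontinuity result for $S$ under strong convergence (Lemma 2.7). Your construction is a $k_n\times k_n$ grid with thin smoothed walls, and your estimate is supposed to come from a direct harmonic-measure/extremal-length computation in the thin tube $W_P$.

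The central step of your argument, however, is missing, and I am not persuaded it can be filled in the form stated. You assert that ``uniformizing $\Omega_n$ so as to realize this --- \emph{equivalently}, bounding from above the harmonic measure at $z$ of $\Omega_n\setminus U$ --- yields $f_n(\Omega_n)\supset(1-\eta_{n_0})\mathbb{D}$'', but you never say what map $f_n$ is, and the equivalence is not a theorem: for a multiply connected $\Omega_n$ there is no straightforward passage from ``a piece of the boundary has small harmonic measure at $z$'' to ``there exists an injective holomorphic $f_n:\Omega_n\to\triangle$ with $f_n(z)=0$ and $B_{1-\eta}(0)\subset f_n(\Omega_n)$''. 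Some canonical map (slit, radial-slit, parallel-slit) has to be chosen, and one then needs a quantitative estimate relating the position of the slits to the geometry near $z$; this is exactly the content the paper isolates in Proposition 2.2 and Lemma 2.4, and it is what you concede is ``the main obstacle''. Your heuristic cost accounting (``crossing each membrane costs $\sim\log(w_m/w_{m+1})$'') is also not justified and is suspect: the hyperbolic/extremal cost of \emph{crossing} a strip of width $w$ is order one, not logarithmic --- it is traveling \emph{along} a strip that is expensive --- so the mechanism that supposedly makes $\eta_{n_0}$ small is not correctly identified. Until the tube estimate and the passage from harmonic measure to a disk-filling embedding are actually proved (and shown to be uniform in $n$), your argument has a genuine gap precisely at the place that carries the full weight of the theorem; the normal-families and Hurwitz endgame, which is the only part you spell out, is the routine part and essentially coincides with the paper's Lemma 2.7.
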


We also show that there exist Cantor sets with completely different behaviour.
\begin{theorem}\label{thm2}
There exists a Cantor set $Q\subset\mathbb P^1$ such that  the following hold
\begin{itemize}
\item[(1)] for any point $x\in Q$ and any neighbourhood $U$ of $x$ we have that $U\cap Q$ has positive 2-dimensional Lebesgue measure, and
\item[(2)] $S_\Omega$ achieves any value between zero and one on $U\cap\Omega$, where $\Omega=\mathbb P^1\setminus Q$.
\end{itemize}
\end{theorem}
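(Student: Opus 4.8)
\emph{Proof strategy.} Write $\Omega=\P^{1}\setminus Q$. As $Q$ is infinite, $\Omega$ is Kobayashi hyperbolic, so $S_\Omega$ is defined; moreover $0<S_\Omega<1$ on $\Omega$ (the upper bound because $\Omega$ is not simply connected) and $S_\Omega$ is continuous. If $U$ is a round disc centred at a point of $Q$ then $U\cap\Omega=U\setminus Q$ is connected, since the totally disconnected compact set $Q$ does not separate $U$. Hence, to get (2) it suffices to build $Q$ so that for every such $U$ and every $\delta>0$ there are $z^{+},z^{-}\in U\cap\Omega$ with $S_\Omega(z^{+})>1-\delta$ and $S_\Omega(z^{-})<\delta$; then the intermediate value theorem gives $S_\Omega(U\cap\Omega)\supseteq(0,1)$. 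To get (1) it suffices that every nonempty relatively open subset of $Q$ have positive area. We construct such a $Q$ by a recursive Cantor scheme in which two kinds of local feature are dense at all small scales.

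\emph{Thin feature.} Let $z_{0}$ lie on the circle $|w-p|=\sqrt{rR}$, where $A=\{r<|w-p|<R\}$ satisfies $A\cap Q=\emptyset$ and both complementary components of $A$ meet $Q$ (the inner one automatically, as it contains $p$). Then $A$ is an essential annulus of modulus $m=\tfrac1{2\pi}\log(R/r)$ in $\Omega$, with core geodesic the circle through $z_{0}$. For any injective holomorphic $f\colon\Omega\hra\D$ with $f(z_{0})=0$, the image $f(A)\subset f(\Omega)\subset\D$ is an essential annulus of modulus $m$, so its core geodesic $f(\{|w-p|=\sqrt{rR}\})$ is a Jordan curve through $0$ of Poincar\'e length at most $\pi/m$ in $f(A)$, hence at most $\pi/m$ in $\D$; such a curve lies in a Euclidean disc about $0$ of radius $O(1/m)$. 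Being essential in $f(\Omega)$, its bounded complementary component lies in that disc and must meet $\P^{1}\setminus f(\Omega)$. Therefore $S_\Omega(z_{0})\le C/\log(R/r)$ for an absolute constant $C$, so a thin enough annular gap produces the point $z^{-}$. This estimate is robust: a high-modulus essential annulus survives every embedding.

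\emph{Thick feature.} Here one uses (the proof of) Theorem~\ref{main}, which for each small $\eta>0$ produces Cantor sets whose complement has squeezing function $\ge 1-\eta$ everywhere; what is needed is a \emph{localized} version of this. Namely: if $V$ is a small square, $F:=Q\cap\overline V$ lies at distance $\ge d$ from $Q\setminus F$, and $F$ agrees, at all scales between $\operatorname{diam}V$ and some much smaller $\tau$, with a piece of a Theorem~\ref{main} set of parameter $\eta$, then every $z\in\Omega$ with $\tau\ll\operatorname{dist}(z,F)\ll\min(d,\operatorname{diam}V)$ satisfies $S_\Omega(z)\ge 1-\eta'(\eta)$ with $\eta'(\eta)\to 0$. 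The idea is that a near-extremal embedding realizing $1-\eta$ for the idealized thick model carries over to $\Omega$ up to an $o(1)$ loss, since the boundary data by which $\Omega$ differs from that model --- namely $Q\setminus F$ and the structure of $F$ below scale $\tau$ --- is hyperbolically far from $z$, and almost-surjective embeddings are almost isometric on compacta. Such a $V$ produces the point $z^{+}$.

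\emph{The construction.} One builds $Q$ recursively on nested families of squares: in each retained square of side $\ell$ first run the Theorem~\ref{main} recipe down to scale $\ell^{N}$, and in the resulting tiny squares distribute three moves --- insert in one a recursively built thick region as above, kept at positive distance from the rest; insert in another an essential annular gap of modulus $\ge M_{\ell}$, with $M_{\ell}\to\infty$, containing a fat Cantor ring to be subdivided further; and keep subdividing the rest. The side-lengths, the parameters $\eta$, and the very sparse sequence of scales carrying gaps are chosen so that every portion of $Q$ retains positive area, the gaps never spoil the thickness required above at the relevant probing scale, and both thick regions and thin gaps accumulate at every point of $Q$. Then any disc $U$ meeting $Q$ contains features of both types, producing the required $z^{\pm}$; (2) follows as above and (1) holds by design. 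The expected main obstacle is the localized thick estimate --- essentially the mechanism behind ``$S_\Omega(z)\to1$ as $z\to Q$'' in Theorem~\ref{main}, which should be available from its proof --- together with the bookkeeping needed to reconcile the three requirements of the construction simultaneously.
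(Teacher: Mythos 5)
Your thin-feature argument is correct and self-contained: a high-modulus essential annulus in $\Omega$ has a core geodesic of small hyperbolic length, hence under any injective $f\colon\Omega\to\triangle$ sending the base point to $0$ its image is a short essential loop in $\triangle$ whose bounded complementary component meets $\triangle\setminus f(\Omega)$, which forces $S_\Omega(z_0)\le C/\log(R/r)$. This is the same mechanism as the paper's Lemma~\ref{smallcubes} (a Kobayashi-short loop that is essential in every embedded image), but your annular-gap packaging is cleaner and produces the low-squeezing points directly.

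The thick-feature step is, as you say yourself, a genuine gap, and it is not just bookkeeping. The assertion that a near-extremal embedding for an ``idealized thick model'' carries over to $\Omega$ with only an $o(1)$ loss does not follow from Theorem~\ref{main}: that extremal map is defined on a different domain, and you would still have to produce a comparable embedding of $\Omega$ itself near $z^+$. The only reason the interference from $Q\setminus F$ and from the sub-$\tau$ structure of $F$ is negligible is a compactness/normal-families statement --- stability of the (slit-map) squeezing function under strong convergence when the extraneous boundary data is hyperbolically far away --- and proving that is the real content. The paper's proof supplies precisely this machinery but organizes it to avoid a comparison with any external model: it constructs $\Omega$ as a strong limit of finitely connected $\Omega_j$, propagating the lower bound on designated loops through each step via the auxiliary function $R$ together with Proposition~\ref{cont} and Lemma~\ref{step}, passing it to the limit by the upper semicontinuity in Lemma~\ref{tentative}, and propagating the upper bound on circles around freshly inserted tiny cubes via the uniformity in Lemma~\ref{smallcubes}. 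To make your scheme rigorous you would essentially need to re-derive this strong-convergence stability; once you do, the ``localized Theorem~\ref{main}'' becomes an extra layer rather than a shortcut, and the paper's interleaved induction is the more economical route.
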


Finally we show that certain  Julia sets arising in one dimensional complex dynamics are Cantor sets which are degenerate from the point of view of the squeezing function, although they can have Hausdorff dimension arbitrarily close to two and thus their complements admit bounded holomorphic functions.   Recall that a compact set of Hausdorff dimension strictly larger than one has strictly 
positive analytic capacity, hence its complement admits bounded holomorphic functions (see e.g. \cite{Pajot},  part (B) of Theorem 64, page 74).

\begin{theorem}\label{thm3}
Let $f_c(z)=z^2 + c$ with $c\notin\mathcal M$. 
Then $\mathbb P^1\setminus\mathcal J_c$ does not admit a bounded injective holomorphic function.  
\end{theorem}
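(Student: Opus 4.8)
The plan is to argue by contradiction, reducing everything to a topological degree count. Suppose $\Omega:=\mathbb{P}^1\setminus\mathcal{J}_c$ admits a bounded injective holomorphic function $g$, and set $\Omega':=g(\Omega)$, a bounded connected open subset of $\mathbb{C}$. First I would recall the standard structure of a Cantor Julia set: since $c\notin\mathcal{M}$ one can choose a Jordan domain $D$ with $\mathcal{J}_c\subset D$, $\overline{D_0}\cap\overline{D_1}=\emptyset$ and $\overline{D_i}\subset D$, where $f_c^{-1}(D)=D_0\sqcup D_1$, and with $f_c|_{D_i}\colon D_i\to D$ a biholomorphism; writing $\psi_i:=(f_c|_{D_i})^{-1}$ and $\psi_w:=\psi_{w_1}\circ\cdots\circ\psi_{w_n}$ for a word $w=w_1\cdots w_n$ in $\{0,1\}$, one has $\mathcal{J}_c=\bigcap_n\bigcup_{|w|=n}\psi_w(D)$, so every $p\in\mathcal{J}_c$ has an address $w=w_1w_2\cdots$. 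The only feature of the dynamics I use is this \emph{conformal} iterated function system with two disjoint pieces: the triply connected domain $W:=D\setminus(\overline{D_0}\cup\overline{D_1})$ and all its copies $\psi_w(W)$ lie in $\Omega$ (they miss $\mathcal{J}_c\subset\overline{D_0}\cup\overline{D_1}$), and $W$ contains, for $i=0,1$, a ring domain $A_i$ separating $\partial D$ from $\partial D_i$ of modulus $\mu_i>0$; put $m:=\min(\mu_0,\mu_1)>0$.

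The heart of the argument is to show that $g$ extends continuously to a map $\widetilde{g}\colon\mathbb{P}^1\to\mathbb{C}$. Fix $p\in\mathcal{J}_c$ with address $w$, write $w|_k:=w_1\cdots w_k$, and let $\gamma_k:=\partial\psi_{w|_k}(D)$, a Jordan curve in $\Omega$ (here $\gamma_0=\partial D$). Since $g$ is injective and continuous, $g(\gamma_k)$ is a Jordan curve bounding a Jordan domain $\Delta_k\subset\mathbb{C}$, and one checks, using that $\Omega'$ is bounded, that $\overline{\Delta_{k+1}}\subset\Delta_k$ and $g(\psi_{w|_k}(D)\cap\Omega)\subset\Delta_k$. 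The rings $B_{w|_k}:=\psi_{w|_k}(A_{w_{k+1}})\subset\Omega$ are pairwise disjoint, $B_{w|_k}$ separates $\gamma_k$ from $\gamma_{k+1}$, and $\operatorname{mod} B_{w|_k}\ge m$; hence $g(B_{w|_0}),\dots,g(B_{w|_{N-1}})$ are disjoint and essential in the ring $\Delta_0\setminus\overline{\Delta_N}$, so by the series law $\operatorname{mod}(\Delta_0\setminus\overline{\Delta_N})\ge Nm$. Since $\Delta_0$ is a fixed bounded domain, the standard modulus--diameter estimate gives $\operatorname{diam}\Delta_N\le C\,e^{-2\pi mN}\to 0$, with $C$ and the rate independent of $p$. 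Thus $\widetilde{g}(p):=\bigcap_N\overline{\Delta_N}$ is well defined, $g(z)\to\widetilde{g}(p)$ as $\Omega\ni z\to p$, and (as $\Delta_N$ depends only on $w|_N$, and the decay is uniform) $\widetilde{g}$ is continuous on all of $\mathbb{P}^1$.

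Finally I would run the degree argument. Viewing $\widetilde{g}$ as a self-map of $\mathbb{P}^1$, we have $\widetilde{g}(\mathbb{P}^1)=\overline{\Omega'}\subsetneq\mathbb{P}^1$ because $g$ is bounded, so $\widetilde{g}$ is not surjective and hence $\deg\widetilde{g}=0$. On the other hand, if $p\in\mathcal{J}_c$ then $\widetilde{g}(p)\notin\Omega'$: otherwise $\widetilde{g}(p)=g(z_0)$ for some $z_0\in\Omega$, and since $g$ is a local homeomorphism near $z_0$ while $g(z)\to\widetilde{g}(p)$ as $z\to p$, comparing values near $z_0$ and near $p$ (with disjoint neighbourhoods) violates injectivity of $g$ on $\Omega$. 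Hence $\widetilde{g}(\mathcal{J}_c)\subset\partial\Omega'$, so for every $a\in\Omega'$ the fibre $\widetilde{g}^{-1}(a)$ lies in $\Omega$ and, since $g$ is injective with $g'\neq 0$ there, consists of a single point at which $\widetilde{g}$ is an orientation-preserving local homeomorphism; therefore $\deg\widetilde{g}=1$. This contradicts $\deg\widetilde{g}=0$, so $\Omega$ admits no bounded injective holomorphic function.

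I expect the main obstacle to be the continuous-extension step: making precise that the \emph{uniform} lower bound $m>0$ on the moduli of the separating rings $B_{w|_k}$ — which is exactly where the self-similarity of $\mathcal{J}_c$, as opposed to that of a general Cantor set, is indispensable (a general Cantor set complement admits bounded univalent functions precisely when these moduli degenerate) — really yields continuity of $\widetilde{g}$ on $\mathbb{P}^1$, together with the planar topological bookkeeping $\overline{\Delta_{k+1}}\subset\Delta_k$ and $g(\operatorname{inside}\gamma_k)\subset\Delta_k$. Once that is in place the degree count is short; it is worth stressing that it is the injectivity of $g$, not merely its boundedness, that is incompatible with $\mathbb{P}^1$ being a sphere.
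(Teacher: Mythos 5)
Your proof is correct in outline and takes a genuinely different route from the paper's. The paper also argues by contradiction from a putative bounded univalent $\varphi$ on $\Omega_\infty=\mathbb P^1\setminus\mathcal J_c$, but its mechanism is dynamical/hyperbolic rather than modulus/degree-theoretic: using the Green's function exhaustion it extracts a nested family of separating loops $\gamma_{s_j}$ in $\varphi(\Omega_\infty)$ converging to $b\triangle$ together with free-homotopic loops $\tilde\gamma_j=f_c^{-j}(\tilde\gamma_0)$ in $\Omega_\infty$; since $f_c$ restricted to $\Omega_\infty$ minus the (forward and backward) critical orbit $C$ is a self-covering, the Kobayashi lengths of the $\tilde\gamma_j$ in $\Omega_\infty\setminus C$ are all equal, while the image loops approaching $b\triangle$ force the Kobayashi lengths in $\Omega_\infty$ to blow up — contradiction. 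Your argument instead uses the IFS structure of $\mathcal J_c$ only to produce a uniform lower bound $m>0$ on the moduli of the separating annuli $B_{w|_k}$, feeds this into the Grötzsch series inequality to get exponential shrinking of the nested Jordan domains $\Delta_N$, concludes that $g$ extends continuously to a map $\widetilde g:\mathbb P^1\to\mathbb P^1$ omitting a neighbourhood of $\infty$, and then kills $\widetilde g$ by the degree count $0\neq 1$. This has the advantage of isolating exactly what is used (a conformal IFS with disjoint pieces and a positive ``separating modulus''), so it would apply verbatim to any such Cantor set, whereas the paper's covering-map step is specific to polynomial dynamics; on the other hand the paper avoids the planar topology needed to set up the continuous extension and the degree count.

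One step you wave at deserves to be made explicit, because ``using that $\Omega'$ is bounded'' is not quite enough by itself to get the inward nesting $\overline{\Delta_{k+1}}\subset\Delta_k$ and $g(\psi_{w|_k}(D)\cap\Omega)\subset\Delta_k$: both components of $\Omega'\setminus g(\gamma_k)$ are bounded, so boundedness alone does not tell you which side of $g(\gamma_k)$ the ``inside'' goes to. The cleanest fix is to renormalize first: replace $g$ by $h:=1/(g-g(\infty))$. Then $h$ is still univalent on $\Omega$, fixes $\infty$, and since $g(\Omega)$ is bounded and $g(\infty)$ is interior to it, $h(\Omega)$ omits a full neighbourhood of $0$. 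Because $h(\infty)=\infty$ and $\infty$ lies outside every $\gamma_k$, the unbounded component of $\mathbb C\setminus h(\gamma_k)$ must contain $h$ of the outside of $\gamma_k$; this pins down the inward nesting once and for all, the fixed curve $h(\gamma_0)=h(\partial D)$ gives the uniform constant $C$ in the diameter bound, and the degree argument goes through with $\widetilde h$ missing a neighbourhood of $0$. (Alternatively one can note that outward nesting is itself killed by the same modulus estimate — $\operatorname{mod}(\Delta_N\setminus\overline{\Delta_0})\ge Nm$ with bounded outer curve would force $\operatorname{diam}\Delta_0=0$ — but the Möbius normalization is simpler.) With that adjustment the proof is complete.
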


Here, $\mathcal J_c$ denotes the Julia set for the function $f_c$, and $\mathcal M$ denotes
the Mandelbrot set, so that $J_c$ is a Cantor set if and only if $c\notin\mathcal M$.

Other Cantor sets of this type were constructed by Ahlfors and Beurling \cite{AhlforsBeurling}.

%
%

\section{A ``large" Cantor Set whose complement resembles the unit disk - Proof of Theorem \ref{main}}

We give some definitions. Let $\triangle\subset \C$ denote the unit disc, and let $B_r(p)\subset \C$ denote the disk of radius $r$  centered at $p$.  Let  $\Omega\subset\mathbb P^1$ be a domain and let $x\in \Omega$. If  
 $\varphi:\Omega\rightarrow\triangle$  is an  injective holomorphic function such that $\varphi(x)=0$, we set
\begin{equation}
S_{\Omega,\varphi}(x):=\sup\{r>0:B_r(0)\subset\varphi(\Omega)\},
\end{equation}
and we set 
\begin{equation}
S_\Omega(x):=\sup_\varphi\{S_{\Omega,\varphi}(x)\},
\end{equation}
where the supremum is taken over  all injective holomorphic functions  $\varphi:\Omega\rightarrow\triangle$ such that $\varphi(x)=0$. The function $S_\Omega$ is called the {\sl squeezing function}. If the domain $\Omega$ does not admit any  bounded injective holomorphic functions, then the squeezing function is called degenerate.
The concept of squeezing function goes back to work by Liu-Sun-Yau, see \cite{LiuSunYau04} (2004),
\cite{LiuSunYau05} (2005) and S.-K- Yeung \cite{Yeung09} (2009).
More recently, Deng-Guan-Zhang, see \cite{DengGuanZhang12} (2012) initiated a basic study of the squeezing function. After that the squeezing function has been investigated by several authors, among them,
Forn\ae ss-Wold \cite{FornaessWold15} (2015), Nikolov-Trybula-Andreev \cite{NikolovTrybulaAndreev16} (2016), Deng-Guan-Zhang \cite{DengGuanZhang16}(2016), 
Joo-Kim \cite{JooKim16} (2016), Kim-Zhang\cite{KimZhang15} (2016), Zimmer \cite{Zimmer17} (2017),
Forn\ae ss-Rong \cite{FornaessRong17} (2017), Forn\ae ss-Shcherbina \cite{FornaessShcherbina17} (2017),
Diederich-Forn\ae ss \cite{DiederichFornaess15} and Forn\ae ss-Wold \cite{FornaessWold16}.
We  will introduce an auxiliary function $R$ that will enable us to bound the squeezing function from below on the limit 
of a certain increasing sequence of domains.  
Let $\Omega\subset\mathbb P^1$ be a domain which admits an injective holomorphic map  $\psi:\Omega\hookrightarrow\triangle$.  Then 
for any point $x\in\Omega$ it is known (see, for example, Theorem 1 in  \cite{ReichWarschawski}) that $\Omega$ also admits a circular slit map, that is an injective holomorphic map $\varphi:\Omega\hookrightarrow \triangle$ onto a circular slit 
domain $S$, such that $\varphi(x)=0$.   By definition, $S$ is a circular slit domain if $\triangle\setminus S$ consists of arcs lying on concentric 
circles centred at the origin (the arcs may degenerate to points).  If $x\in \Omega$, we let ${\sf Slit}_x(\Omega)$ denote 
the set of all circular slit maps  that sends $x$ to the origin.  For a domain $\Omega\subset\mathbb P^1$ we define 
\begin{equation}
R_\Omega(x):=\underset{\varphi\in\mathcal {\sf Slit}_x(\Omega)} {\sup} \{S_{\Omega,\varphi}(x)\}.
\end{equation}
Notice that by definition $R_\Omega\leq S_\Omega$.

\begin{definition}
Let $\{\Omega_j\}_{j\in\mathbb N}$ be a sequence of domains in $\mathbb P^1$, and set $K_j:=\mathbb P^1\setminus\Omega_j$. 
We say that $\Omega_j$ converges \emph{strongly}
to a domain $\Omega\subset\mathbb P^1$ with $K:=\mathbb P^1\setminus\Omega$, if the compact sets $K_j$ converge to  $K$ in the Hausdorff distance, and we write $\Omega_j\overset{s}{\rightarrow}\Omega$.  If  $x_j\in\Omega_j$
and if $x_j\rightarrow x\in\Omega$ we write $(\Omega_j,x_j)\overset{s}{\rightarrow}(\Omega,x)$.
\end{definition}


\begin{prop}\label{cont}
Let $\Omega\subset\mathbb P^1$ be a finitely connected domain such that no boundary component of $\Omega$ is a point.  Let $N\in\mathbb N$, and let $\{\Omega_j\}$ be a sequence of domains, where each 
$\Omega_j$ is $m_j$-connected with $m_j\leq N$. 
Assume that $(\Omega_j,x_j)\overset{s}{\rightarrow}(\Omega,x)$.
Then $R_{\Omega_j}(x_j)\rightarrow R_\Omega(x)$.  
\end{prop}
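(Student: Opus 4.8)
I would prove the two inequalities
$\liminf_j R_{\Omega_j}(x_j)\ge R_\Omega(x)$ and $\limsup_j R_{\Omega_j}(x_j)\le R_\Omega(x)$ separately, in both cases producing circular slit maps on the target by taking suitable limits of circular slit maps on the source. The crucial input that I expect to carry all the weight is a normal–families / Carath\'eodory-kernel argument: a sequence of injective holomorphic maps $\varphi_j:\Omega_j\hookrightarrow\triangle$ with $\varphi_j(x_j)=0$ has, after passing to a subsequence, a limit $\varphi$ which is holomorphic on a neighbourhood of every compact subset of $\Omega$ (using that $\Omega_j\overset{s}{\rightarrow}\Omega$ so compacts of $\Omega$ eventually sit inside all the $\Omega_j$), and $\varphi$ is either injective or constant; the normalization $\varphi(x)=0$ together with a derivative bound (coming from the fact that no boundary component of $\Omega$ is a point, so $\Omega$ carries a nonconstant bounded holomorphic function and the maps cannot all degenerate) rules out the constant case. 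This is where the hypotheses ``finitely connected, no boundary component a point'' and ``$m_j\le N$'' really enter.

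\smallskip\noindent\textbf{Lower semicontinuity.} Fix $\varphi\in{\sf Slit}_x(\Omega)$ with $S_{\Omega,\varphi}(x)$ close to $R_\Omega(x)$, so $\varphi(\Omega)=S$ is a circular slit domain and $B_r(0)\subset S$ with $r$ close to $R_\Omega(x)$. I want to transplant $\varphi$ to $\Omega_j$. Since $\Omega$ is finitely connected with nondegenerate boundary, $\varphi$ extends to a neighbourhood of $\overline\Omega$ (via Schwarz reflection across the analytic slit boundary, or just because $\varphi$ is a conformal map of a finitely connected analytic-type domain); then $\varphi|_{\Omega_j}$ is defined and injective for $j$ large because $K_j\to K$ in Hausdorff distance forces $\Omega_j$ to be contained in the domain of the extension, eventually. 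The image $\varphi(\Omega_j)$ is a nearby finitely connected domain, and postcomposing with the circular slit map of $\varphi(\Omega_j)$ sending $0$ to $0$ — call it $\chi_j$, which by Proposition-type continuity of slit maps (or by the explicit extremal characterization via the Grunsky/area inequality) tends to the identity on compacts as $\varphi(\Omega_j)\to S$ — yields $\chi_j\circ\varphi\in{\sf Slit}_{x_j}(\Omega_j)$. Since $\chi_j\to\id$ uniformly near $\overline{B_r(0)}$ and $B_r(0)\subset S\subset$ (eventually) $\varphi(\Omega_j)$, we get $B_{r-o(1)}(0)\subset\chi_j(\varphi(\Omega_j))$, hence $R_{\Omega_j}(x_j)\ge r-o(1)$. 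Letting $j\to\infty$ and then $r\to R_\Omega(x)$ gives the liminf bound.

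\smallskip\noindent\textbf{Upper semicontinuity.} For each $j$ pick $\varphi_j\in{\sf Slit}_{x_j}(\Omega_j)$ with $S_{\Omega_j,\varphi_j}(x_j)\ge R_{\Omega_j}(x_j)-1/j$; after a subsequence $R_{\Omega_j}(x_j)\to L:=\limsup_j R_{\Omega_j}(x_j)$. By the normal-families argument above, $\varphi_j\to\varphi$ locally uniformly on $\Omega$ with $\varphi:\Omega\hookrightarrow\triangle$ injective and $\varphi(x)=0$. The key remaining point is that $\varphi$ is again a \emph{circular slit} map: each complementary component of $\varphi_j(\Omega_j)$ lies on a concentric circle $\{|w|=\rho_{i,j}\}$; since $m_j\le N$ there are at most $N$ of them, and by Hausdorff-convergence of $K_j$ to $K$ (and of $\Omega_j$ to $\Omega$) the complementary components of $\varphi(\Omega)$ are Hausdorff limits of those of $\varphi_j(\Omega_j)$, hence each lies on a limit circle $\{|w|=\rho_i\}$ — so $\varphi(\Omega)$ is a circular slit domain and $\varphi\in{\sf Slit}_x(\Omega)$. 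Finally, if $B_{r_j}(0)\subset\varphi_j(\Omega_j)$ with $r_j\to L$, then locally-uniform convergence plus the Hausdorff convergence of the complements upgrades to $B_L(0)\subset\varphi(\Omega)$ (any point of $B_L(0)$ outside $\varphi(\Omega)$ would be a limit of points outside $\varphi_j(\Omega_j)$, contradicting $B_{r_j}(0)\subset\varphi_j(\Omega_j)$ with $r_j\to L$). Hence $R_\Omega(x)\ge S_{\Omega,\varphi}(x)\ge L$, which is the desired $\limsup$ bound, completing the proof.

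\smallskip\noindent\textbf{Main obstacle.} The genuinely delicate step is the non-degeneracy of the limit map $\varphi$ in the upper-semicontinuity half — ruling out $\varphi\equiv 0$ — together with controlling that the limit really is a circular slit domain with its inscribed disk not shrinking; this is exactly where the finite-connectivity bound $m_j\le N$ and the assumption that $\Omega$ has no point boundary components are indispensable (without a uniform bound on connectivity one could lose slits or have the extremal disk collapse in the limit). I would isolate this as a lemma: a locally-uniform limit of slit maps of uniformly boundedly connected domains, normalized at a fixed interior point, is a slit map, with the radius of the inscribed disk upper-semicontinuous from above and lower-semicontinuous from below — equivalently continuous — under strong convergence.
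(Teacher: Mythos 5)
Your high-level plan (split into $\liminf$ and $\limsup$ and argue by normal families) is reasonable, but both halves have genuine gaps, and the $\liminf$ half rests on a false premise.

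\textbf{Lower semicontinuity.} You want to transplant a circular slit map $\varphi\in{\sf Slit}_x(\Omega)$ to $\Omega_j$ by extending $\varphi$ past $\overline\Omega$. This fails: the hypothesis is only that no boundary component of $\Omega$ is a point, so each complementary component $K_k$ is a continuum, but it can be a completely arbitrary (even non--locally-connected) continuum. Schwarz reflection needs analytic boundary on the \emph{source} side, and the slits on the image side do not help you extend $\varphi$ across $b\Omega$; in general the slit map of a finitely connected domain does not extend holomorphically (or even continuously) to the boundary. Moreover, strong convergence $K_j\to K$ in the Hausdorff metric does not force $K_j\supset K$, so $\Omega_j$ may contain points of $\overline\Omega$, or even of the interior of $K$ when $K$ has interior; those points are outside any conceivable domain of definition of $\varphi$, so ``$\varphi|_{\Omega_j}$ is defined and injective for $j$ large'' is simply not available. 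Finally, your appeal to ``Proposition-type continuity of slit maps'' to show $\chi_j\to\id$ is circular --- that continuity is essentially the statement being proved --- and the alternative via the area/Grunsky inequality is not carried out.

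\textbf{Upper semicontinuity.} Two things are left unresolved. First, you need the limit $\varphi$ of the near-extremal $\varphi_j$ to be non-constant; you justify this only by saying $\Omega$ admits a bounded injective function. That is not enough on its own, and if you instead try to bootstrap from the $\liminf$ bound to get $L>0$, you are relying on the flawed first half. Second, you assert that the complementary components of $\varphi_j(\Omega_j)$ converge to concentric slits and so the limit is a slit domain, but you do not rule out the possibility that a slit closes up to a full circle $\{|w|=\rho\}$ with $\rho<1$, in which case the limit is not a circular slit domain and the ball $B_L(0)$ need not sit inside $\varphi(\Omega)$. The paper's proof handles exactly these two points by a Kobayashi-length argument: if $\varphi_j'(x_j)\to 0$ (resp.\ if a slit closed up), then a fixed small circle $bB_r(0)$ has uniformly bounded Kobayashi length in $\varphi_j(\Omega_j)$ while its preimage in $\Omega_j$ separates one complementary component from the rest; since the diameters of those components are uniformly bounded from below (the boundary of $\Omega$ is non-degenerate), the preimage has spherical length, hence Kobayashi length, bounded from below and then in fact going to infinity --- a contradiction. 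You flag this as ``the genuinely delicate step'' but do not supply the argument.

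\textbf{Structural comparison.} The paper avoids the transplanting problem entirely by working with the \emph{canonical} slit maps of $\Omega_j$: for each complementary component $K^j_k$ that is not a point there is a unique slit map $\varphi^j_k$ identifying $K^j_k$ with $b\triangle$ and normalized at $x_j$. After the Kobayashi-length argument gives a uniform lower bound on $(\varphi^j_k)'(x_j)$, one extracts a locally uniform limit, shows (again via the Kobayashi-length estimate, plus normality of the inverse maps) that the limit is a slit map of $\Omega$, and observes that by varying $k$ one obtains all the finitely many canonical slit maps of $\Omega$. This yields both inequalities simultaneously and uses only the hypotheses actually given.
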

\begin{proof}
Let $K_1,...,K_m$ denote the complementary components of $\Omega$.  Then for each $1\leq k\leq m$ there is a
\emph{unique} slit map $\varphi_k:\Omega\rightarrow\triangle$ such that $\varphi_k$ identifies   $K_k$ with $b\triangle$, $\varphi_k(x)=0$ and $\varphi_k'(x)>0$ (see, for example, Theorem 7 in \cite{ReichWarschawski}).
In particular, $R_\Omega(x)$ is realised by one (or more) of these maps.  \

Similarly, each $\Omega_j$ has complementary components $K^j_{k}$ for $1\leq k\leq m_j$,  and for the $K^j_k$'s that are not points, there are unique slit maps 
$\varphi^j_k$  identifying $K^j_k$ with $b\triangle$,  $\varphi^j_k(x_j)=0$ and  $(\varphi^j_k)'(x_j)>0$.  \

After re-grouping to simplify notation, we may assume that there is a sequence $s_j\leq m_j$  such that the compact set $K^j_1\cup\cdot\cdot\cdot\cup K^j_{s_j} $ converges in the Hausdorff distance to a complementary component of $\Omega$, say $ K_1$,
and groups of the other $K^j_i$'s converge to the other complementary components of $\Omega$.  Since the 
diameter of $K_1$ is strictly positive, we may assume that there is a lower bound for the diameters of the sets $\{K^j_1\}$.

We first claim that there exists a constant $c>0$ such that $(\varphi^j_{1})'(x_j)>c$ for all $j$. Notice that, in view of Koebe's $\frac{1}{4}$-theorem, our claim implies that  all slits are bounded away from zero. 
 Assume 
by contradiction that there is such a sequence  $(\varphi^j_{1})'(x_j)\rightarrow 0$. 
For any convergent subsequence of the sequence $(\varphi^j_{1})$, the limit map is constantly equal to $0$. Choose such a convergent subsequence and denote it still by $(\varphi^j_{1})$.  
After possibly having to pass to a subsequence, we may now choose a nontrivial loop $\tilde\gamma:=bB_r(0)$, where $0<r<1$, which is  contained in $\varphi^j_1(\Omega_j)$ for all $j$, such that the Kobayashi length of $\tilde\gamma$ in $\varphi^j_1(\Omega_j)$ is uniformly bounded from above.

 Let $U$ be any (small) open neighbourhood of $K_1$.  Then for $j$ sufficiently 
large, we have that $\varphi^j_1(\Omega_j\setminus U)$ is contained in the disk bounded by $\tilde\gamma$.
Set $\tilde\gamma^j:=(\varphi^{j}_1)^{-1}(\tilde\gamma)$.   Then since $\varphi^j_1$ identifies $K^j_1$ with 
$b\triangle$, we have that $\Omega_j\setminus U$ is on one side of $\tilde\gamma_j$ and $K^j_1$ 
is on the other.  Then the spherical lengths of the $\tilde\gamma_j$'s are bounded uniformly from 
below, since the diameters of the $K^j_1$'s are bounded uniformly from below.   But then the Kobayashi length  of $\tilde\gamma^j$ in $\Omega_j$ goes to infinity, a contradiction. 
So we 
may extract a subsequence from $\varphi^j_{1}$ converging uniformly on compact subsets of $\Omega$ 
to an injective map $\tilde\varphi:\Omega\rightarrow\triangle$.

We claim  that $\tilde\varphi$ maps 
$\Omega$ onto a slit domain. First we show that the slits cannot close up to a circle of radius strictly less than one.
Indeed, fix a compact set $L$ in $\Omega.$ Then we can assume that the sequence converges uniformly on $L$.
This implies that if there is a slit $S$ of minimal radius $r<1$ which closes up to a circle in the limit, then
eventually all the images of $L$ must be contained in the disc of radius $r.$ Then arguing as above we can pick a circle of radius $r<s<1$ so that on the preimages of the circle, the Kobayashi length is arbitrarily large. This is impossible.

We can assume that the slits converge. The complement of the limiting slits is connected. Pick any compact subset $F$
of the complement of the limiting slits. Consider the inverse maps of the $\varphi^j_k$. 
This is a normal family. Indeed,  we can remove a small disc around  a point  where the  sequence $\varphi^j_k$ is uniformly convergent. 
After this removal the family of inverses is a normal family.
The limit map of the inverses is then the inverse of a slit map on $\Omega$, which proves that the limit map $\tilde \varphi$ is a slit map. \

So $\varphi$ is, up to rotation, the unique slit map which identifies $K_1$ with $b\triangle$, and 
by choosing other complementary components than $K_1$ in the above construction, 
all the possible slit maps $\varphi_k:\Omega\rightarrow\triangle$
may be obtained as such limits.  So we would 
arrive at a contradiction if we did not have $R_{\Omega_j}(x_j)\rightarrow R_\Omega(x)$.
\end{proof}

\begin{lemma}\label{step}
Let $Q_j=[a_j,b_j] \times [c_j,d_j]\subset\mathbb C$ be pairwise disjoint cubes for $j=1,...,m$, and set 
\begin{equation}
\Omega:=\mathbb P^1\setminus (Q_1\cup\cdot\cdot\cdot\cup Q_m).
\end{equation}
For each $j$, set 
\begin{equation}
\Gamma_j:=\{a_j+(1/2)(b_j-a_j)\}\times [c_j,d_j], 
\end{equation}
and for $k\in\mathbb N$ denote by $\Gamma_j(1/k)$ the open $\frac{1}{k}$-neighborhood of $\Gamma_j$, and by 
$Q_{j,k}^l$ and  $Q_{j,k}^r$ the left and right connected components of $Q_j\setminus\Gamma_j(1/k)$ respectively.
Set 
\begin{equation}
\Omega_k:=\mathbb P^1\setminus (Q_{1,k}^l\cup Q_{1,k}^r\cup\cdot\cdot\cdot\cup Q_{m,k}^l\cup Q_{m,k}^r).
\end{equation}
Then for any $\epsilon>0$ there exist $\delta>0$ and $N\in\mathbb N$ such that, for all $k\geq N$,
\begin{equation}\label{smallnear}
R_{\Omega_k}(z)\geq 1-\epsilon \mbox { if  }z\in\Omega_k\cap Q_j(\delta) \mbox{ for some } j,
\end{equation}
and 
\begin{equation}\label{approx}
|R_{\Omega_k}(z)-R_{\Omega}(z)|<\epsilon \mbox{ if } z\notin Q_j(\delta) \mbox{ for all } j,
\end{equation}
where $Q_j(\delta)$ denotes the $\delta$-neighborhood of $Q_j$.

\end{lemma}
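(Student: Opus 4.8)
The plan is to derive both \eqref{smallnear} and \eqref{approx} from the continuity statement of Proposition~\ref{cont}, using throughout that $R$ is a conformal invariant and that the domains $\Omega_k$ and their rescalings are $2m$-connected (in particular of uniformly bounded connectivity); the proof of \eqref{smallnear} needs in addition a blow-up near the cubes.

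For \eqref{approx}, fix any $\delta>0$ and let $X_\delta:=\mathbb{P}^1\setminus\bigcup_jQ_j(\delta)$, a compact set contained in $\Omega$ and in every $\Omega_k$ (since $Q_{j,k}^l\cup Q_{j,k}^r\subset Q_j\subset Q_j(\delta)$). I would argue by contradiction: if \eqref{approx} failed for this $\delta$, there would be $k_n\to\infty$ and $z_n\in X_\delta$ with $|R_{\Omega_{k_n}}(z_n)-R_\Omega(z_n)|\ge\epsilon$; after passing to a subsequence $z_n\to z_*\in X_\delta\subset\Omega$. Since $Q_{j,k}^l\cup Q_{j,k}^r$ differs from $Q_j$ only by a strip of width $2/k$, the complements $\bigcup_j(Q_{j,k_n}^l\cup Q_{j,k_n}^r)$ converge to $\bigcup_jQ_j$ in the Hausdorff distance, hence $(\Omega_{k_n},z_n)\overset{s}{\rightarrow}(\Omega,z_*)$, and Proposition~\ref{cont} gives $R_{\Omega_{k_n}}(z_n)\to R_\Omega(z_*)$; applying the same proposition to the constant sequence $\Omega$ with basepoints $z_n$ gives $R_\Omega(z_n)\to R_\Omega(z_*)$, a contradiction. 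Thus \eqref{approx} holds for every $\delta>0$ once $k$ is large.

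For \eqref{smallnear}, suppose it fails for every $\delta>0$ and all large $k$; then there are $k_n\to\infty$, a single index $j$ (after passing to a subsequence), and points $z_n\in\Omega_{k_n}\cap Q_j(1/n)$ with $R_{\Omega_{k_n}}(z_n)<1-\epsilon$. Because $z_n\in Q_j(1/n)$ and $Q_{j,k_n}^l\cup Q_{j,k_n}^r$ is $Q_j$ with a strip of width $2/k_n$ removed, the quantity $d_n:=\mathrm{dist}(z_n,\,Q_{j,k_n}^l\cup Q_{j,k_n}^r)$ satisfies $d_n\le\frac1n+\frac2{k_n}\to0$. Set $T_n(w):=d_n^{-1}(w-z_n)$, an affine automorphism of $\mathbb{P}^1$, and $\widehat\Omega_n:=T_n(\Omega_{k_n})$; by conformal invariance $R_{\widehat\Omega_n}(0)=R_{\Omega_{k_n}}(z_n)<1-\epsilon$, and $\mathbb{P}^1\setminus\widehat\Omega_n$ is a union of $2m$ rescaled rectangles. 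For large $n$, $d_n$ is less than the distance from $z_n$ to the cubes $Q_i$ with $i\ne j$ (these distances are bounded below, the $Q_i$ being disjoint), so the complementary point of $\Omega_{k_n}$ closest to $z_n$ lies in $Q_{j,k_n}^l\cup Q_{j,k_n}^r$; hence at least one of $T_n(Q_{j,k_n}^l),T_n(Q_{j,k_n}^r)$ lies at distance exactly $1$ from $0$, while every rescaled rectangle $T_n(C)$ with $\mathrm{dist}(z_n,C)/d_n\to\infty$ --- this includes the $2m-2$ rectangles coming from the $Q_i$, $i\ne j$, and possibly one half of $Q_j$ --- has diameter $\to\infty$ and converges spherically to $\{\infty\}$. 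Since both halves of $Q_j$ have side lengths bounded below (by $\frac13(b_j-a_j)$ and $d_j-c_j$ for large $n$), passing to a further subsequence the one or two surviving rescaled rectangles converge to a closed half-plane or quadrant; as $\infty$ already lies in the closure of these, $\widehat\Omega_n\overset{s}{\rightarrow}\Omega_\infty$ where $\mathbb{P}^1\setminus\Omega_\infty$ is a connected closed set --- one or two half-planes/quadrants joined through $\infty$ --- that is not a single point, and $0\in\Omega_\infty$. Therefore $\Omega_\infty$ is a simply connected domain, different from $\mathbb{C}$ and $\mathbb{P}^1$, so $R_{\Omega_\infty}(0)=1$ (the Riemann map of $\Omega_\infty$ onto $\triangle$ taking $0$ to $0$ is a degenerate circular slit map), and Proposition~\ref{cont} gives $R_{\widehat\Omega_n}(0)\to1$, contradicting $R_{\widehat\Omega_n}(0)<1-\epsilon$. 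Hence there is a $\delta>0$ for which \eqref{smallnear} holds for all large $k$; shrinking $\delta$ keeps \eqref{smallnear} valid and, by the previous paragraph, makes \eqref{approx} hold for the same $\delta$, so a single $N$ works for both.

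The step I expect to be the main obstacle is this last part of the argument for \eqref{smallnear}: one must scale by precisely $d_n$ so that a complementary rectangle survives at distance $1$, pass to subsequences so the surviving rectangle(s) acquire a well-defined limiting half-plane or quadrant, and --- most delicately --- verify that the escaping rectangles do not create a spurious isolated $\{\infty\}$ boundary component in the limit, which would invalidate the hypotheses of Proposition~\ref{cont}. They do not, precisely because the surviving rectangle becomes unbounded in the limit and hence already carries $\infty$ in its closure, forcing the whole limiting complement to be a single non-degenerate continuum and $\Omega_\infty$ to be simply connected.
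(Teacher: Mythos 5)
Your proof of \eqref{approx} is essentially the paper's (both defer to Proposition~\ref{cont}), though you spell out the compactness step. Your proof of \eqref{smallnear} is a genuinely different route. The paper first maps $\Omega_k$ conformally onto the complement of two disks normalized at infinity, uses the symmetry $\phi_k(z)=-\phi_k(-z)$ to get equal radii, shows the separation parameter $\delta_k\to 0$, and then invokes the auxiliary Lemma~\ref{uniform} --- which in turn performs a M\"obius blow-up of the two-disk picture and calls Proposition~\ref{cont}. You bypass the disk normalization (and hence the symmetry argument and Lemma~\ref{uniform}) entirely: you rescale the rectangular picture by the distance $d_n$ from the bad point to the nearest half-cube, observe that in the Hausdorff limit the surviving rescaled rectangle(s) become half-planes or quadrants joined at $\infty$, so the limit domain is simply connected with non-degenerate boundary, and then apply Proposition~\ref{cont} once. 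The key point that you correctly flag --- that the escaping rectangles converge spherically to $\{\infty\}$, which is already in the closure of the surviving half-plane, so no spurious point-component arises --- is exactly what makes the hypotheses of Proposition~\ref{cont} hold. (A small inaccuracy: it is not the Euclidean ``diameter $\to\infty$'' of the escaping rectangles that matters, but that all their points have modulus $\geq \mathrm{dist}(z_n,C)/d_n\to\infty$, which gives spherical Hausdorff convergence to $\{\infty\}$ and spherical diameter $\to 0$.) Your approach is more direct and avoids the auxiliary lemma; the paper's approach isolates Lemma~\ref{uniform} as a reusable intermediate statement. Both ultimately rest on the same Proposition~\ref{cont}, and both are correct.
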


\begin{proof}
Since all cases are similar, to avoid notation, we prove \eqref{smallnear} for $j=1$. 
We may also assume that $Q_1=[-1,1]\cup [-1,1]$.  \

For each $k\in \N$ there is a unique conformal map $\phi_k: \mathbb{P}^1\setminus   Q_{1,k}^l\cup Q_{1,k}^r  \rightarrow\mathbb P^1$ such that the image is the complement of two closed disks $B_k^1$ and $B_k^2$, normalized 
by the condition
\begin{equation}
\phi_k(z)= z + \sum_{j=1}^\infty a^k_j(1/z)^j
\end{equation}
near infinity  (see e.g. \cite{Goluzin}, Theorem 2, page 237). 
Then by uniqueness, $\phi_k(z)=-\phi_k(-z)$, so the two disks have the same size. 
Moreover, since each $\phi_k$ is  normalized 
to have derivative one at infinity, the radii of the disks have to be bounded from above and from below:
we can assume that the centers and the radii (in the spherical metric) converge.  Indeed, by the Koebe 1/4-theorem the discs must all be in a bounded region of $\mathbb C.$ Hence the radii are bounded above.
Next we assume that the radii converge to $0.$
Let $p,q$ denote the limits of the centers. Then the inverses are a normal family in the complement of the two points, hence the limit must be constant. This is only possible if
$p,q=\infty$ contradicting the uniform boundedness of the discs. 
%

So by scaling 
and rotation, we may then assume that 
\begin{equation}
B_k^1=B_1(-1-\delta_k) \mbox{ and } B_k^2=B_1(1+\delta_k),
\end{equation}
for some $\delta_k > 0$,  where in general $B_r(p)$ denotes the disk of radius $r$ centered at $p$ (however, we have now possibly destroyed the normalization condition).

We now show that  $\delta_k\rightarrow 0$. Otherwise, consider the circles $|z-(1+\delta_k)|=1+\delta_k.$ These have uniformly bounded Kobayashi length. However their preimage goes around one of the rectangles and passes between the rectangles, where the Kobayashi metric is arbitrarily large. Hence their Kobayashi length is unbounded, contradiction.

 Since we may assume that the sequence $\{\phi_k\}$ converges to a conformal map, we get \eqref{smallnear}  from Lemma \ref{uniform} below. And since all cases are similar, we conclude 
that  \eqref{smallnear} holds for any $j=1,...,m$. 
Finally \eqref{approx} follows from Proposition \ref{cont}.
\end{proof}

\begin{lemma}\label{uniform}
Set $\Omega:=\mathbb P^1\setminus (\overline B_1(-1)\cup\overline B_1(1)\cup K)$ be a domain, with 
$K$ a compact set with finitely many connected components, disjoint from $\overline B_1(-1)\cup\overline B_1(1)$.
Let $\delta_j\searrow 0$, and suppose that 
\begin{equation}
\Omega_j:=\mathbb P^1\setminus (\overline B_1(-1-\delta_j)\cup \overline B_1(1+\delta_j)\cup K_j)
\end{equation}
is a sequence of domains such that $K_j\rightarrow K$ with respect to Hausdorff distance, and 
such that the number of connected components of $K_j$ is uniformly bounded.   
Then for any $\epsilon>0$ there exists $\eta>1$ such that $R_{\Omega_j}(z)\geq 1-\epsilon$
for all $j$ large enough such that $\overline B_1(-1-\delta_j)\subset B_\eta(-1)$ and $\overline B_1(1+\delta_j)\subset B_\eta(1)$,
and for all $z\in (B_\eta(-1)\cup B_\eta(1))\cap\Omega_j$.
\end{lemma}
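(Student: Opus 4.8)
The plan is to argue by contradiction, reducing everything to Proposition~\ref{cont} via a blow-up argument. Suppose the assertion fails: there is $\epsilon_0>0$ so that, taking $\eta=\eta_n:=1+\tfrac1n$, one can find indices $j_n\to\infty$ and points $z_n\in(B_{\eta_n}(-1)\cup B_{\eta_n}(1))\cap\Omega_{j_n}$ with $R_{\Omega_{j_n}}(z_n)<1-\epsilon_0$. Using the symmetry $w\mapsto -w$ we may assume $z_n\in B_{\eta_n}(1)$ for all $n$. Since $\eta_n\to1$ and $\delta_{j_n}\to0$, the point $z_n$ lies in the crescent $B_{\eta_n}(1)\setminus\overline B_1(1+\delta_{j_n})$, whose diameter tends to $0$; hence, after passing to a subsequence, $z_n\to z_\infty\in b\overline B_1(1)$ and the numbers $d_n:=\mathrm{dist}(z_n,\partial\Omega_{j_n})$ tend to $0$.

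Next I would rescale: set $T_n(w):=(w-z_n)/d_n$, $\wt\Omega_n:=T_n(\Omega_{j_n})$, $\wt z_n:=T_n(z_n)=0$, so $\mathrm{dist}(0,\partial\wt\Omega_n)=1$. Since $R$ is invariant under Möbius maps (composing a circular slit map with $T_n^{-1}$ is a bijection of the relevant families preserving the image), $R_{\wt\Omega_n}(0)=R_{\Omega_{j_n}}(z_n)<1-\epsilon_0$. The heart of the argument is the claim that, along a further subsequence, $(\wt\Omega_n,0)\overset{s}{\rightarrow}(\Omega_\infty,0)$ for some open half-plane or open infinite strip $\Omega_\infty\subset\C$, and that $\wt\Omega_n$ has uniformly bounded connectivity (it equals that of $\Omega_{j_n}$). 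To prove the claim I would examine $\P^1\setminus\wt\Omega_n=T_n(\overline B_1(1+\delta_{j_n}))\cup T_n(\overline B_1(-1-\delta_{j_n}))\cup T_n(K_{j_n})$. Because $K$ is disjoint from $\overline B_1(\pm1)$ while $z_\infty\in\overline B_1(1)$, we have $\mathrm{dist}(z_n,K_{j_n})\to\mathrm{dist}(z_\infty,K)>0$, so $T_n(K_{j_n})\to\{\infty\}$ in spherical Hausdorff distance. Each $T_n(\overline B_1(\pm(1+\delta_{j_n})))$ is a round disk of radius $d_n^{-1}\to\infty$ not containing $0$, whose distance to $0$ is $D_n^\pm\ge1$ with $\min(D_n^+,D_n^-)=1$ for large $n$, and whose closest point to $0$ points in a direction converging to a unit vector $u^\pm$ (well defined since $z_\infty\neq\pm1$); passing to a subsequence with $D_n^\pm\to D^\pm\in[1,\infty]$, such a disk converges to the closed half-plane $\{\mathrm{Re}(\overline{u^\pm}w)\ge D^\pm\}\cup\{\infty\}$ when $D^\pm<\infty$, and to $\{\infty\}$ when $D^\pm=\infty$. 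Disjointness of the two original disks forces the two limiting half-planes to be disjoint, and when both are genuine half-planes one gets $u^+=1$, $u^-=-1$ (so they are parallel and point in opposite directions); hence $\Omega_\infty=\P^1\setminus(L^+\cup L^-\cup\{\infty\})$ is a convex open half-plane or strip, it is simply connected, and $0\in\Omega_\infty$ with $\mathrm{dist}(0,\partial\Omega_\infty)=1$. This gives the claim.

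Finally I would invoke Proposition~\ref{cont} for the sequence $(\wt\Omega_n,0)$: the limit $\Omega_\infty$ is finitely connected with no point boundary component and the $\wt\Omega_n$ have uniformly bounded connectivity, so $R_{\wt\Omega_n}(0)\to R_{\Omega_\infty}(0)$. But $\Omega_\infty$ is simply connected, so the only circular slit map sending $0$ to $0$ is the Riemann map onto $\triangle$ (there are no slits), giving $R_{\Omega_\infty}(0)=1$ — contradicting $R_{\wt\Omega_n}(0)<1-\epsilon_0$. The routine but slightly delicate point is the last step of the claim: the spherical-Hausdorff convergence of the degenerating disks to half-planes (or to $\{\infty\}$) and the consequent verification that $\Omega_\infty$ is always genuinely a half-plane or a strip, never degenerate. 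The conceptual content is simply that at the scale $d_n$ the picture near $z_n$ becomes that of a point inside a half-plane, or inside the strip between two parallel lines, and there $R\equiv1$.
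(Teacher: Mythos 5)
Your proof is correct and follows the same overall strategy as the paper — argue by contradiction, conformally normalize so the bad point goes to the origin, show the normalized domains converge strongly to a simply connected limit with no point boundary component, and then invoke Proposition~\ref{cont} to force $R\to 1$ — but the normalization you choose is genuinely different and worth noting. The paper translates so that the nearby disk $\overline B_1(1+\delta_{j_k})$ is centered at the origin, inverts by $z\mapsto 1/z$ to land inside the unit disk, and finally moves the point to $0$ by a disk automorphism $\psi_k$; the key technical input there is the Sublemma giving an explicit lower bound (namely $\log 2$) on the Poincar\'e distance between $z_k''$ and the image $D_k$ of the far disk, which is what keeps $\psi_k(D_k)$ from collapsing onto the origin. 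You instead blow up linearly at the scale $d_n=\mathrm{dist}(z_n,\partial\Omega_{j_n})$, so the limit is read off directly as an open half-plane or a parallel strip containing $0$ at distance $1$ from the boundary; since $\min(D_n^+,D_n^-)=1$ eventually, at most one of the two degenerating disks escapes to $\infty$, so the limit is never $\C$ and always has a single non-degenerate boundary component, which is exactly the hypothesis of Proposition~\ref{cont}. Your route trades the Poincar\'e-distance estimate for the (routine) verification of spherical-Hausdorff convergence of the rescaled picture, and makes the simple connectivity of the limit essentially obvious; either approach is legitimate.

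One small factual slip: the set $B_{\eta_n}(1)\setminus\overline B_1(1+\delta_{j_n})$ is a crescent that wraps almost all the way around $bB_1(1)$ (for instance $1\pm i$ lie in it for every $n$), so its Euclidean diameter stays close to $2$ and does \emph{not} tend to $0$. What is true, and is all you actually use, is that this crescent converges to the circle $bB_1(1)$ in Hausdorff distance; that already gives both $z_n\to z_\infty\in bB_1(1)$ along a subsequence and $d_n\to 0$ (the crescent is squeezed between $bB_{\eta_n}(1)$ and $bB_1(1+\delta_{j_n})\subset\partial\Omega_{j_n}$). With that justification replaced, the argument stands.
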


\begin{proof}
Assume to get a contradiction that there exist $\epsilon>0$ and  sequences $\eta_k\searrow 1$, $j_k\rightarrow\infty$, such that
$$\overline B_1(-1-\delta_{j_k})\subset B_{\eta_k}(-1),\quad \overline B_1(1+\delta_{j_k})\subset B_{\eta_k}(1)$$ and  a sequence $z_k\in(B_{\eta_k}(-1)\cup B_{\eta_k}(1))\cap \Omega_{j_k}$  such that $R_{\Omega_{j_k}}(z_k)< 1-\epsilon$.  We may assume that $\mathrm{Re}(z_k)\geq 0$ for all $k$.\

Set $f_k(z):=z-(1+\delta_{j_k})$, $\Omega_{j_k}':=f_{k}(\Omega_{j_k})$, and $z_{k}':=f_{k}(z_k)$.
Note that  $1<|z_k'|\leq 2\eta_k-1$  and that $f_k(-2-\delta_{j_k})=-3(1+(2/3)\delta_{j_k})$.  Next set 
$g_k(z):=1/z$, $\Omega_{j_k}'':=g_k(\Omega_{j_k}')$, and $z_k'':=g_k(z_k')$.   
Then $|z_k''|\geq\frac{1}{2\eta_k-1}$ and $|g_k(f_k(-2-\delta_{j_k}))|<1/3$.  \

To sum up: $\Omega_{j_k}''$ is a domain obtained by removing a disk $D_k$  and the compact set $g_k(f_k(K_{j_k}))$ from the unit disk, 
the point $q_k$  on the boundary of $D_k$ closest to the origin is of modulus less than one third, and there 
is a point $z_k''\in\Omega_{j_k}''$ with $|z_k''|\geq\frac{1}{2\eta_k-1}$
for which $R_{\Omega_{j_k}''}(z_k'')< 1- \epsilon$.  

Clearly, the Poincar\'{e} distances
between $z_k''$ and $q_k$, and $z_k''$ and $g_k(f_k(K_{j_k}))$, goes to infinity as $k\rightarrow\infty$, so if we set
$\psi_k(z):=\frac{z-z_k''}{1-\overline z_k'' z}$, after possibly having to pass to a subsequence and in view of the following below sublemma, the domains $\psi_k(\Omega_{j_k}'')$
converge to a simply connected domain with respect to strong convergence.
Applying Proposition \ref{cont} and using one more time the following sublemma, this implies that $R_{\psi_k(\Omega_{j_k}'')}(0)\rightarrow 1$ as 
$k\rightarrow\infty$ - a contradiction.
\end{proof}

\begin{sublemma}
We have that $\liminf_{k\rightarrow\infty}d_P(z''_k,D_k)>0$, where $d_P$ denotes the Poincar\'{e} distance. 
\end{sublemma}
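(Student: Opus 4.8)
The plan is to reduce the claim to a statement purely about the positions of the point $z_k''$ and the disk $D_k$ inside the unit disk, and then to use the fact that the Poincar\'{e} distance to a set is continuous in the Hausdorff limit provided the limiting configuration is non-degenerate. Recall the situation: $z_k''$ satisfies $|z_k''|\geq \frac{1}{2\eta_k-1}\to 1$, so after passing to a subsequence we may assume $z_k''\to z_\infty$ with $|z_\infty|=1$, i.e. $z_k''$ approaches the boundary of the unit disk. On the other hand, the point $q_k\in bD_k$ closest to the origin has $|q_k|<1/3$, so $D_k$ always contains a boundary point well inside the unit disk. The content of the sublemma is that these two facts force $z_k''$ and $D_k$ to stay Poincar\'{e}-far apart.

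First I would argue by contradiction: suppose $d_P(z_k'',D_k)\to 0$ along a subsequence. Since $d_P$ dominates the Euclidean distance near a point but, more importantly, since $z_k''\to z_\infty\in b\triangle$, any point $w_k\in \overline{D_k}$ with $d_P(z_k'',w_k)\to 0$ must also satisfy $w_k\to z_\infty$; in particular $|w_k|\to 1$. Now I use the elementary geometric fact that a Euclidean disk $D_k\subset\triangle$ which contains a boundary point $q_k$ with $|q_k|<1/3$ and also (essentially) contains a point $w_k$ with $|w_k|\to 1$ must have Euclidean diameter bounded below by a positive constant, say $\mathrm{diam}(D_k)\geq 1/3$ eventually. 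Combined with the fact that $D_k$ reaches out to modulus $\to 1$, this means $D_k$ occupies a definite "chunk" of $\triangle$ that is not retreating to the boundary. The Poincar\'{e} distance from $z_k''$ (which is within Euclidean distance $o(1)$ of the part of $\overline{D_k}$ near $z_\infty$) to the far part of $D_k$ near $q_k$ is then bounded below: going from modulus near $1$ to modulus $1/3$ inside $\triangle$ costs a definite amount of hyperbolic length. But $d_P(z_k'',D_k)=\inf_{w\in D_k}d_P(z_k'',w)$ is computed as the distance to the \emph{nearest} point of $D_k$, which is the point near $z_\infty$ — so this alone does not give a contradiction, and I must be more careful: the point is that the assumption $d_P(z_k'',D_k)\to0$ forces $z_k''$ to be Poincar\'{e}-close to $bD_k$, hence (since $|q_k|<1/3$ makes $D_k$ "large" in the hyperbolic sense, with a boundary arc reaching well into the interior) the hyperbolic collar around $bD_k$ of a fixed small radius would have to contain $z_k''$; one then checks that such a collar, for disks satisfying our constraints, stays away from $b\triangle$, contradicting $|z_k''|\to 1$.

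The main obstacle I anticipate is making the last geometric estimate uniform and clean. Concretely, I want a statement of the form: there is $r_0>0$ such that for every Euclidean disk $D\subset\triangle$ whose closest boundary point to $0$ has modulus $<1/3$, the hyperbolic $r_0$-neighborhood of $D$ in $\triangle$ is contained in $\{|z|<1-c_0\}$ for some fixed $c_0>0$. To prove this I would apply a M\"obius automorphism of $\triangle$ sending a suitable center of $D_k$ to the origin — since $|q_k|<1/3$, the disk $D_k$ cannot be too hyperbolically small and cannot hug $b\triangle$, so after normalization $D_k$ has hyperbolic diameter and position in a compact range, its hyperbolic $r_0$-neighborhood lies in a fixed compact subset of $\triangle$, and pulling back shows $z_k''$ cannot simultaneously lie in that neighborhood and have $|z_k''|\to1$. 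This normalization step, together with a short compactness argument, should dispatch the sublemma; the routine Schwarz–Pick computations I would leave implicit.
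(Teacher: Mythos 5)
Your proposal has a genuine gap, and I think the best way to see it is to compute what $D_k$ actually is. Recall $D_k=g_k\bigl(f_k(\overline B_1(-1-\delta_{j_k}))\bigr)$ with $g_k(z)=1/z$ and $f_k(z)=z-(1+\delta_{j_k})$. The real boundary points of $\overline B_1(-1-\delta_{j_k})$ map, after $f_k$, to $-3-2\delta_{j_k}$ and $-1-2\delta_{j_k}$, and after $g_k$ to $\frac{1}{-3-2\delta_{j_k}}$ and $\frac{1}{-1-2\delta_{j_k}}$. So $D_k$ is the closed Euclidean disk with real diameter $\bigl[\frac{1}{-1-2\delta_{j_k}},\,\frac{1}{-3-2\delta_{j_k}}\bigr]$, which converges to $\overline{B_{1/3}(-2/3)}$ as $\delta_{j_k}\to 0$ --- a disk \emph{tangent} to $b\triangle$ at $-1$. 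In particular, $D_k$ itself contains points of modulus arbitrarily close to $1$. Consequently, the clean geometric statement you aim for --- that the hyperbolic $r_0$-neighbourhood of any Euclidean disk $D\subset\triangle$ whose closest boundary point to $0$ has modulus $<1/3$ is contained in some fixed $\{|z|<1-c_0\}$ --- is false: for $D=D_k$ not even $D$ itself lies in such a set. The intermediate phrase ``$D_k$ cannot hug $b\triangle$'' and ``$D_k$ has hyperbolic diameter in a compact range'' are both false as well; the hyperbolic diameter of $D_k$ tends to infinity.

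The underlying issue is that your argument tries to get by using only the two facts $|z_k''|\to 1$ and $|q_k|<1/3$. These alone do \emph{not} imply the sublemma. A point $z_k''$ approaching the tangency point $-1$ from just outside $D_k$ can have $d_P(z_k'',D_k)\to 0$ while $|z_k''|\to 1$; the only thing ruling this out is the additional constraint $\mathrm{Re}(z_k)\geq 0$, equivalently $\mathrm{Re}(z_k')\geq -(1+\delta_{j_k})$, which after inversion places $z_k''$ on the far side of the separating circle $\tilde\gamma_k:=g_k(\{\mathrm{Re}\,z=-1-\delta_{j_k}\})$ from $D_k$. The paper's proof hinges exactly on this: every path from $z_k''$ to $D_k$ must cross $\tilde\gamma_k$, and the Poincar\'{e} distance from $\tilde\gamma_k$ to $D_k$ is computed (via the real points of the two circles, using that Poincar\'{e} disks are Euclidean disks) to converge to $\log 2>0$. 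Your proposal never invokes this separating circle, and without it the ``compactness after normalisation'' step breaks down because the objects being normalised do not stay in a compact family. To repair the argument you would need to incorporate the constraint $\mathrm{Re}(z_k')\geq -(1+\delta_{j_k})$ and then essentially reconstruct the paper's estimate on the hyperbolic distance between the two nested circles.
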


\begin{proof}
Note that $\mathrm{Re}(z_k')\geq -1- \delta_{j_k}$, so that if we set $\gamma_k:=\{z\in\mathbb C:\mathrm{Re}(z)=-1-\delta_{j_k}\}$, 
then any curve connecting $z_k''$ and $D_k$ will have to pass through $\tilde\gamma_k=g_k(\gamma_k)$.  So 
it is enough to find a lower bound for the Poincar\'{e} distance between $D_k$ and $\tilde\gamma_k$ for large $k$.
Now the real points on $\tilde\gamma_k$ are $0$ and $\frac{1}{-1-\delta_{j_k}}$, and the real points on $bD_k$
are $\frac{1}{-1-2\delta_{j_k}}$ and $\frac{1}{-3(1+(2/3)\delta_{j_k})}$, and using the fact that Poincar\'{e} disks are Euclidean disks, 
it suffices to control the distance between $\frac{1}{-1-\delta_{j_k}}$ and $\frac{1}{-1-2\delta_{j_k}}$, and between $0$ and $\frac{1}{-3(1+(2/3)\delta_{j_k})}$.
The last distance is clearly bounded away from zero, so we compute the first.   We have that 
$$
\lim_{k\rightarrow\infty }\log\frac{1+\frac{1}{1+\delta_{j_k}}}{1-\frac{1}{1+\delta_{j_k}}} - \log\frac{1+\frac{1}{1+2\delta_{j_k}}}{1-\frac{1}{1+2\delta_{j_k}}}  = \lim_{k\rightarrow\infty} \log \frac{1 - \frac{1}{1+2\delta_{j_k}}}{1 - \frac{1}{1+\delta_{j_k}}}= \log 2.
$$

\end{proof}

\begin{lemma}\label{tentative}
Let $\Omega\subset\mathbb P^1, x\in\Omega$, and suppose 
that $\mathbb P^1\setminus\Omega$ contains at least three points. 
Suppose that
$\Omega_j\overset{s}\rightarrow \Omega$.   
Then $$r:=\limsup_{j\rightarrow\infty}S_{\Omega_j}(x)\leq S_{\Omega}(x).$$ 
\end{lemma}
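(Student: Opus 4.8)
The plan is to show that any value achieved by $S_{\Omega_j}$ in the limit is achieved by $S_\Omega$ itself, by extracting a limit of the near-optimal squeezing maps for $\Omega_j$ and checking that the limit is an admissible competitor for $S_\Omega(x)$. First I would pass to a subsequence so that $S_{\Omega_j}(x)\to r$. For each $j$ pick $\varphi_j:\Omega_j\hookrightarrow\triangle$ injective holomorphic with $\varphi_j(x)=0$ and $S_{\Omega_j,\varphi_j}(x)> S_{\Omega_j}(x)-1/j$, so that $B_{r_j}(0)\subset\varphi_j(\Omega_j)$ with $r_j\to r$. The $\varphi_j$ are uniformly bounded, hence a normal family on, say, a fixed small disk about $x$ that lies in all $\Omega_j$ for large $j$ (possible since $\Omega_j\overset{s}\to\Omega$ and $x\in\Omega$). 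Since $\mathbb P^1\setminus\Omega$ — and hence $\mathbb P^1\setminus\Omega_j$ for $j$ large — contains at least three points, I can invoke Montel/Hurwitz-type arguments on the increasing exhaustion to get a subsequence converging uniformly on compacts of $\Omega$ to a holomorphic map $\varphi:\Omega\to\overline\triangle$ with $\varphi(x)=0$.

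The key steps after that are: (1) $\varphi$ is non-constant — this follows because $\varphi'(x)=\lim\varphi_j'(x)$ and, by the Schwarz lemma applied on the fixed disk about $x$ sitting inside $\varphi_j(\Omega_j)\supset B_{r_j}(0)$, the derivatives $\varphi_j'(x)$ are bounded below away from zero (or, more robustly, use that $B_{r_j}(0)\subset\varphi_j(\Omega_j)$ and the inverse branches are uniformly bounded, so by Koebe the derivative at $x$ stays bounded below); (2) $\varphi$ is injective — by Hurwitz's theorem, since each $\varphi_j$ is injective and $\varphi$ is non-constant; (3) by the maximum principle $\varphi(\Omega)\subset\triangle$; (4) the image contains $B_r(0)$. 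For (4), fix $\rho<r$ and $w$ with $|w|<\rho$; then for $j$ large $w\in B_{r_j}(0)\subset\varphi_j(\Omega_j)$, so $w=\varphi_j(p_j)$ for some $p_j\in\Omega_j$. Using that $\varphi_j^{-1}$ restricted to $\overline{B_\rho(0)}$ is a uniformly bounded family (its image avoids the fixed three points coming from $\mathbb P^1\setminus\Omega$ in the Hausdorff limit, once one is slightly careful), extract a limit $\varphi^{-1}$ on $B_\rho(0)$, a holomorphic right inverse to $\varphi$, whose image lies in $\Omega$; thus $w\in\varphi(\Omega)$. Letting $\rho\uparrow r$ gives $B_r(0)\subset\varphi(\Omega)$, hence $S_\Omega(x)\ge S_{\Omega,\varphi}(x)\ge r$.

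The main obstacle is step (4), specifically controlling the inverse maps $\varphi_j^{-1}$ near the boundary circle $|w|=r$ and ensuring the limiting right inverse lands inside $\Omega$ rather than drifting onto $\partial\Omega$: one must rule out that points $p_j=\varphi_j^{-1}(w)$ escape to $\mathbb P^1\setminus\Omega$. This is exactly where the hypothesis that $\mathbb P^1\setminus\Omega$ has at least three points is used — it guarantees the family $\{\varphi_j^{-1}\}_{|w|\le\rho}$ is normal (Montel with three omitted values, taken from the Hausdorff limit of the complements) and that the limit is open, so its image is a genuine open subset of $\Omega$. A secondary technical point is handling the convergence on an increasing exhaustion $\Omega\subset\Omega_j$ only in the strong (Hausdorff) sense rather than a nested sequence; this is routine once one fixes compact exhausting subsets $L\Subset\Omega$ and notes $L\subset\Omega_j$ for $j$ large. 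I would also remark that since $R_{\Omega_j}\le S_{\Omega_j}$, this lemma complements Proposition \ref{cont} and will be used to transfer the lower bounds from $R$ to $S$ in the proof of Theorem \ref{main}.
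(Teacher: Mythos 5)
Your proposal is correct and follows essentially the same route as the paper: pass to a near-optimal sequence $\varphi_j$, extract a locally uniform limit $\varphi$ on $\Omega$ (uniform boundedness already gives normality for the forward maps), use Koebe/Kobayashi with the three omitted points to bound $\varphi_j'(x)$ away from zero, apply Hurwitz for injectivity, and then control the inverse maps $\varphi_j^{-1}$ on disks $B_\rho(0)$ via Montel with the three omitted values to see that the limit image contains $B_r(0)$. The paper organizes this slightly differently — it first normalizes the omitted triples by a compact family of Möbius maps and applies Kobayashi hyperbolicity of $\mathbb P^1\setminus\{a_1,a_2,a_3\}$ to get convergence of $\varphi_j^{-1}|_{B_{r'}(0)}$ directly, and then deduces the derivative bounds and the forward limit — but the ingredients and the overall strategy are the same as yours. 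The one place where both your writeup and the paper are terse is the verification that the limiting right inverse $\psi=\lim\varphi_j^{-1}$ actually maps into $\Omega$ rather than onto a piece of $\partial\Omega$; you flag this explicitly as the main technical point, which is fair, and the identity $\varphi\circ\psi=\mathrm{id}$ on $\psi^{-1}(\Omega)$ together with the openness and injectivity of $\psi$ is what one needs to finish it.
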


\begin{proof}
If $r=0$ this is clear, so we assume that $r>0$.   Then, after possibly having to pass to a subsequence, there exists a sequence $\varphi_j: \Omega_j\rightarrow\triangle$
of embeddings, $\varphi_j(x)=0$, $ B_{r_j}(0)\subset\varphi_j(\Omega_j)$, $r_j\rightarrow r$.  Let $a_j\in\mathbb P^1$ be distinct 
points such that $a_i\notin\Omega$ for $i=1,2,3$.  For any $\delta>0$ the ball $B_\delta(a_i)$ is not contained in $\Omega_j$ for 
all $j$ large enough.  So we may fix $0<\delta<<1$, and assume that 
there exist points $a^j_i\in B_\delta(a_i)$ such that $a^j_i\notin\Omega_j$
for all $j$ and for $i=1,2,3$.
Since there is a compact family of M\"{o}bius transformations mapping 
the triples $\{a^j_1,a^j_2,a^j_3\}$ to the triple $\{a_1,a_2,a_2\}$, and 
since the complement of three points is Kobayashi hyperbolic, we may assume   that for all $0<r'<r$ the sequence $\varphi_j^{-1}|_{B_{r'}(0)}$ is convergent.
Hence the derivatives of $\varphi_j'(x)$ are uniformly bounded below and above. Therefore we can assume that
the $\varphi_j$ converge to an injective holomorphic map from $\Omega$ to $\triangle$. Moreover the image contains the disc of radius $r.$

\end{proof}

\emph{Proof of Theorem \ref{main}:}

Set $Q^1=I^2$.   By alternating  Lemma \ref{step}  and its horizontal analogue we obtain a 
decreasing sequence $Q^j$ of disjoint unions of cubes, such that 
\begin{itemize}
\item[(1)] $Q:=\cap_{j\geq 1} Q^j$ is a Cantor set with 2-dimensional Lebesgue measure arbitrarily close to one, 
\item[(2)] $R_{\mathbb P^1\setminus Q^j}\geq 1 - \epsilon$, and 
\item[(3)] For any sequence $(z_j)$ in $\mathbb P^1\setminus Q$ converging to  $Q$ and any $\delta>0$ there exists an $N\in\mathbb N$ such that
$R_{\mathbb P^1\setminus Q^i}(z_j)>1-\delta$ whenever $j\geq N$ and $i$ is large enough (depending on $j$).  
\end{itemize}
By (1) the two-dimensional Lebesgue measure of $Q$ can be arbitrarily close to one.
By (2) and by    Lemma \ref{tentative}  it follows that $S_{\mathbb P^1\setminus Q}(z)\geq 1-\epsilon$. 
By (3) and by Lemma \ref{tentative} it follows that $S_{\mathbb P^1\setminus Q}(z_j)\geq 1-\delta$ for all $j\geq N$, and  hence $\lim_{\mathbb P^1\setminus Q\ni
z\rightarrow  Q}S_{\mathbb P^1\setminus Q}(z)=1$.
$\hfill\square$

\section{A ``large" Cantor Set whose complement does not resemble the unit disk - Proof of Theorem \ref{thm2}}

We modify the construction in the previous section.  For an inductive construction, assume that we have constructed 
a family $\mathcal{Q}^j:=\{Q_1^j,\dots, Q_{m(j)}^j\}$ of $m(j)$ disjoint cubes.  We may choose $m(j)$ closed loops $\Gamma^j_i$, each surrounding 
and being so close to one of the cubes, that $S_{\Omega_j}(z)\geq 1-1/j$ if $z\in\Gamma^j_i$ for 
some $1\leq i \leq m(j)$, where $\Omega_j=\mathbb P^1\setminus \cup_iQ_i^j$.  Further we may choose a finite number of points $p^j_1,...,p^j_{k(j)}$ in $\Omega_j$ such that we find a point 
$p^j_\ell$ in any $1/j$-neighbourhood of any point in $b\Omega_j$, and such that 
$S_{\Omega'_j}(z)\geq 1-2/j$ if  $z\in \cup_{1\leq i\leq m(j)}\Gamma^j_i$, where we denote  $\Omega'_j:=\Omega_j\setminus \cup_{1\leq \ell\leq k(j)}\{p^j_\ell\}$.  The reason is that the removal 
of a set sufficiently close to the boundary of a domain, will essentially not disturb a lower bound for 
neither $S$ nor $R$.

Then by  Lemma \ref{smallcubes}  below and  Proposition \ref{cont} we may choose an arbitrarily small $\delta_j>0$
and arbitrarily small cubes $\tilde Q^j_\ell\subset \{|z-p^j_\ell|<\delta_j\}$ such that 
\begin{itemize}
\item[(i)] $S_{\Omega''_j}(z)\leq 1/j$ if $|z-p^j_\ell|=\delta_j$ for some $1\leq \ell\leq k(j)$, and 
\item[(ii)] $S_{\Omega''_j}(z)\geq 1 - 3/j$ if $z\in\Gamma^j_i$ for some $1\leq i\leq m(j)$,
\end{itemize}
where we denote $\Omega_j'':=\Omega_j\setminus \cup_\ell \tilde Q^j_\ell$.
By applying Lemma \ref{step} twice we may divide each cube in the collection 
$$ 
\{Q_1^j,\dots, Q_{m(j)}^j,   \tilde Q^j_1,\dots, \tilde Q^j_{k(j)}   \}
$$ into 
four, creating a new collection of cubes $\mathcal{Q}^{j+1}$ such that 
\begin{itemize}
\item[(i)] $S_{\Omega_{j+1}}(z)\leq 2/j$ if $|z-p^j_\ell|=\delta_j$ for some $1\leq \ell\leq k(j)$, and 
\item[(ii)] $S_{\Omega_{j+1}}(z)\geq 1 - 4/j$ for $z\in\Gamma^j_i$ for some $1\leq i\leq m(j)$,
\end{itemize}
where $\Omega_{j+1}$ denotes the complement of the cubes in $\mathcal{Q}^{j+1}$.
The inductive step may be repeated indefinitely so as to ensure that for all $k>1$ we still have that 
\begin{itemize}
\item[(i')] $S_{\Omega_{j+k}}(z)< 3/j$ for $|z-p^j_\ell|=\delta_j$ for some $\ell$, and 
\item[(ii')] $S_{\Omega_{j+k}}(z)>1 - 5/j$ for $z\in\Gamma^j_i$ for some $i$.  
\end{itemize}
 We now define
$$
Q \coloneqq \limsup_{j \rightarrow \infty} {\bigcup_{Q_l^j \in \mathcal{Q}^j}{Q_l^j}}
$$
\noindent
If in each step of the construction the points $p^j_i$ were 
chosen close enough to each of the previously constructed cubes, it follows that any 
connected component of $Q$ must be a point (since the diameters of the cubes go to zero), and 
no point will be isolated.  Hence $Q$ is a Cantor set.  It follows from Lemma \ref{smallcubes} that 
we may arrange that statement corresponding to (i') holds in the limit.   The statement corresponding to (ii') holds in the limit by Lemma \ref{tentative} since  $\Omega_j$ converges strongly to $\P^1\setminus Q$.

\begin{lemma}\label{points}
Let $K\subset\mathbb P^1$ be a compact set such that $\Omega=\mathbb P^1\setminus K$
admits a bounded injective holomorphic function.   Let $p_1,...,p_m\in\Omega$ be distinct points, 
and set $\Omega'=\Omega\setminus\{p_1,...,p_m\}$.  Then 
\begin{equation}
\lim_{\Omega'\ni z_j\rightarrow p_j}S_{\Omega'}(z)=0, 
\end{equation}
for $j=1,...,m$.
\end{lemma}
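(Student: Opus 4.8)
The plan is to show that adding isolated punctures $p_1,\dots,p_m$ to $\Omega$ forces the squeezing function to vanish as one approaches any $p_j$. The key point is that near an isolated puncture, the domain $\Omega'$ looks (up to a biholomorphism of a small neighbourhood) like a punctured disk, and a punctured disk is ``thin'' from the point of view of any bounded univalent map: the image of a small loop around $p_j$ has to bound a tiny region, so no large disk centered at the image of $z_j$ can fit inside.

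\begin{proof}[Proof plan]
Fix $j$ and suppose, for contradiction, that there is a sequence $z_k\to p_j$ in $\Omega'$ with $S_{\Omega'}(z_k)\geq\alpha>0$. For each $k$ choose an embedding $\varphi_k:\Omega'\hookrightarrow\triangle$ with $\varphi_k(z_k)=0$ and $B_{\alpha}(0)\subset\varphi_k(\Omega')$. First I would normalize: since $p_j$ is an isolated point of $\mathbb P^1\setminus\Omega'$, pick a small coordinate disk $B=B_\rho(p_j)$ whose closure meets $\mathbb P^1\setminus\Omega'$ only in $\{p_j\}$, so that $B\setminus\{p_j\}\subset\Omega'$. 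Consider the loops $\gamma_k:=\varphi_k(bB_{r}(p_j))$ for a fixed small $r<\rho$ with $r$ chosen so that eventually $z_k\in B_r(p_j)$. Each $\gamma_k$ is a Jordan curve in $\triangle$ separating $0=\varphi_k(z_k)$ from $b\triangle$, because $\varphi_k$ is an embedding sending the ``outer'' part of $\Omega'$ toward $b\triangle$ and $z_k$ lies inside $B_r(p_j)$.

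The main step is a length/area estimate. The hyperbolic (Kobayashi) length of the loop $bB_r(p_j)$ inside $\Omega'$ is bounded \emph{below} by its length in the larger domain $B\setminus\{p_j\}\supset$ nothing is larger; rather, I use the \emph{upper} bound: since $B\setminus\{p_j\}\subset\Omega'$, the Kobayashi metric of $\Omega'$ is dominated by that of the punctured disk $B\setminus\{p_j\}$, and the hyperbolic length of $bB_r(p_j)$ in the punctured disk $B_\rho(p_j)\setminus\{p_j\}$ tends to $0$ as $r\to 0$ (the punctured disk has hyperbolic metric $\sim |dz|/(|z|\log(1/|z|))$, so a circle of radius $r$ has length $\sim 2\pi/\log(1/r)\to 0$). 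Hence by choosing $r$ small we make the Kobayashi length of $bB_r(p_j)$ in $\Omega'$ as small as we like, uniformly in $k$. Since $\varphi_k:\Omega'\to\triangle$ is distance-nonincreasing for the Kobayashi metrics, $\gamma_k=\varphi_k(bB_r(p_j))$ has hyperbolic length in $\triangle$ at most this same small quantity. But $\gamma_k$ is a loop in $\triangle$ enclosing $0$, and a loop enclosing $0$ in $\triangle$ has hyperbolic length bounded below by the hyperbolic length of the circle through which it must pass; more precisely, if $B_\alpha(0)\subset\varphi_k(\Omega')$ then $\gamma_k$ must lie outside $B_\alpha(0)$ (because $B_\alpha(0)$ consists of points in the image whose $\varphi_k$-preimages are not on $bB_r(p_j)$... ) — here one argues that $\gamma_k$ together with $z_k=0$ being ``inside'' and the large embedded disk forces $\gamma_k$ to enclose $B_{\alpha'}(0)$ for some fixed $\alpha'>0$, and any Jordan curve in $\triangle$ enclosing $B_{\alpha'}(0)$ has hyperbolic length bounded below by a positive constant $c(\alpha')$. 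This contradicts the length going to $0$.

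I expect the main obstacle to be making rigorous the claim that $\gamma_k$ must enclose a disk $B_{\alpha'}(0)$ of \emph{fixed} radius. This requires controlling where the large embedded disk $B_\alpha(0)$ sits relative to the image of $B_r(p_j)$ versus the image of $\Omega'\setminus B_r(p_j)$. One clean way: by Koebe distortion applied to $\varphi_k^{-1}$ on $B_\alpha(0)$, or by a normal-families argument — after precomposing $\varphi_k$ with nothing and postcomposing with a M\"obius automorphism of $\triangle$ fixing $0$, one can extract a subsequential limit of $\varphi_k$ on the fixed domain $\Omega'$ (using that $\Omega'$, containing $\geq 3$ complementary points, is hyperbolic, and that $\varphi_k'(z_k)$ is bounded below by $\alpha$ via Koebe, and bounded above since the maps land in $\triangle$), obtaining a limit embedding $\varphi_\infty:\Omega'\to\triangle$ with $\varphi_\infty(p_j)=:w_\infty\in\triangle$ as a removable value and $B_\alpha(0)\subset\varphi_\infty(\Omega')$. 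But then $\varphi_\infty$ extends across $p_j$ (bounded, so removable singularity), so $w_\infty\in\varphi_\infty(\Omega'\cup\{p_j\})$, hence $w_\infty$ is an \emph{interior} point not in $\varphi_\infty(\Omega')$ — yet it must be approached from inside, contradicting openness unless $w_\infty\in b\triangle$, which is impossible since $\varphi_\infty(z_k)\to\varphi_\infty(p_j)$ would force $0$... this is exactly the tension: $\varphi_\infty(z_k)=0$ for all $k$ yet $z_k\to p_j$ gives $\varphi_\infty(p_j)=0$, so $0\notin\varphi_\infty(\Omega')$, contradicting $B_\alpha(0)\subset\varphi_\infty(\Omega')$. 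This normal-families route is cleaner than the length estimate and is the one I would ultimately write up; the length estimate serves as intuition for why the bound $\alpha$ cannot persist.
\end{proof}
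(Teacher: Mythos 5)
Your preferred route (the normal-families argument) is the same general strategy as the paper's proof -- extend $\varphi_k$ across the puncture, take a locally uniform limit, and exploit $\varphi_\infty(p_j)=0$ -- but as you wrote it there is a genuine gap at the final step, and the auxiliary estimates you invoke are wrong. The claim that $B_\alpha(0)\subset\varphi_\infty(\Omega')$ does \emph{not} follow from $B_\alpha(0)\subset\varphi_k(\Omega')$ for every $k$: images can shrink under locally uniform limits. Worse, the claim is actually false in this situation: if $\varphi_\infty$ is non-constant it is injective (Hurwitz), it extends injectively across $p_j$ (a bounded univalent map on a punctured disk extends univalently), and since $\varphi_\infty(p_j)=0$ and $p_j\notin\Omega'$, the value $0$ is \emph{omitted} on $\Omega'$. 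So you would be trying to contradict $0\in\varphi_\infty(\Omega')$ with a statement that is itself false. Also the Koebe estimate you cite is backwards: applying Koebe to $\varphi_k^{-1}|_{B_\alpha(0)}$ (which maps $0$ to $z_k$ and has image inside $\Omega'$) forces $|(\varphi_k^{-1})'(0)|\le\frac{4}{\alpha}\,\mathrm{dist}(z_k,\partial\Omega')\to 0$, i.e.\ $|\varphi_k'(z_k)|\to\infty$, not bounded above; there is no nondegeneracy of the limit coming from that bound.

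The correct way to close the argument -- and this is what the paper does, in compressed form -- is to obtain the contradiction at \emph{finite} $k$ rather than in the limit. Since each $\varphi_k$ extends to a map that is still injective on $\Omega'\cup\{p_1,\dots,p_m\}$, the value $\varphi_k(p_j)$ is omitted by $\varphi_k$ on $\Omega'$; combined with $B_\alpha(0)\subset\varphi_k(\Omega')$ this gives $|\varphi_k(p_j)|\ge\alpha$ for every $k$. On the other hand, locally uniform convergence of the extended maps near $p_j$ together with $z_k\to p_j$ and $\varphi_k(z_k)=0$ gives $\varphi_k(p_j)\to\varphi_\infty(p_j)=0$, a contradiction. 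Note that with this phrasing you do not even need $\varphi_\infty$ to be non-constant. Your first sketch (the Kobayashi-length estimate) is a different approach and could in principle be completed -- for instance by comparing the hyperbolic density of the simply connected set $\varphi_k^{-1}(B_\alpha(0))$ at $z_k$ (which equals $2/\alpha$ by conformal invariance) with that of the punctured disk $B_r(p_j)\setminus\{p_j\}$, which blows up as $z_k\to p_j$ -- but you correctly identified that what you wrote leaves the crucial enclosure claim unproved.
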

\begin{proof}
We consider $p_1$.  Assume to get a contradiction that there exists a sequence 
$\Omega'\ni z_j\rightarrow p_1$ and injective holomorphic maps $\varphi_j:\Omega'\rightarrow\triangle, \varphi_j(z_j)=0$, 
and $B_r(0)\subset\varphi_j(\Omega')$ for some $r>0$.  All maps extend holomorphically across $p_1,....,p_m$, 
and we may extract a subsequence converging to a limit map $\varphi$, with $\varphi(p_1)=0$.  Since 
$|\varphi(p_1)-\varphi(z_j)|\rightarrow 0$ as $j\rightarrow\infty$, this leads to a contradiction.  
\end{proof}

%

\begin{lemma}\label{smallcubes}
Let $K\subset\mathbb P^1$ be a compact set such that $\Omega=\mathbb P^1\setminus K$
admits a bounded injective holomorphic function.  Let $p_1,...,p_m\in\Omega$ be 
distinct points, and let $\epsilon>0$.  Then there exist $\delta_1>0$ (arbitrarily small)
and $0<\delta_2<<\delta_1$, 
such that for any 
domain $\Lambda\subset\mathbb P^1$ with $\mathbb P^1\setminus \Lambda\subset K(\delta_2)\cup(\cup_{j=1}^m B_{\delta_2}(p_j))$ (with at least one complementary component in each $B_{\delta_2}(p_j)$), we have that $S_{\Lambda}(z)<\epsilon$
for all $z\in\Lambda$ with $|z-p_j|=\delta_1$ for some $j$.  Here $K(\delta_2)$ denotes the $\delta_2$-neighbourhood of $K$. 
\end{lemma}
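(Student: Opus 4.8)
The plan is to reduce the statement to the behavior of the squeezing function near an isolated point of the boundary, since the hypothesis places at least one complementary component of $\Lambda$ inside each ball $B_{\delta_2}(p_j)$, and this component is very small. First I would argue by contradiction: suppose the conclusion fails for some $\epsilon>0$. Then there are sequences $\delta_1^{(n)}\searrow 0$, $\delta_2^{(n)}\searrow 0$ with $\delta_2^{(n)}\ll\delta_1^{(n)}$, domains $\Lambda_n$ with $\mathbb P^1\setminus\Lambda_n\subset K(\delta_2^{(n)})\cup\bigcup_j B_{\delta_2^{(n)}}(p_j)$ and at least one complementary component $E_n^{(j)}\subset B_{\delta_2^{(n)}}(p_j)$ for each $j$, together with points $z_n$ with $|z_n-p_{j(n)}|=\delta_1^{(n)}$ for some index $j(n)$ (pass to a subsequence so $j(n)\equiv j_0$ is constant, and so $\delta_1^{(n)}\to\delta_1^\ast\geq 0$) and injective holomorphic maps $\varphi_n:\Lambda_n\to\triangle$ with $\varphi_n(z_n)=0$ and $B_r(0)\subset\varphi_n(\Lambda_n)$ for a fixed $r>0$.

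The key geometric point I would then exploit is that $z_n$ is trapped: it lies at spherical distance $\delta_1^{(n)}$ from $p_{j_0}$, hence (for large $n$) outside $B_{\delta_2^{(n)}}(p_{j_0})$, while $E_n^{(j_0)}$ is a nonempty compact complementary component sitting inside $B_{\delta_2^{(n)}}(p_{j_0})$. So $z_n$ and $E_n^{(j_0)}$ are both in a bounded region but separated in a way that, after normalizing, looks like a point and a nearby puncture. The cleanest route is to compose with Möbius maps as in the proof of Lemma~\ref{uniform}: translate $p_{j_0}$ to the origin, rescale by $1/\delta_1^{(n)}$ so that $z_n$ lands on the unit circle, and observe that under this rescaling $E_n^{(j_0)}$ is contained in a ball of radius $\delta_2^{(n)}/\delta_1^{(n)}\to 0$ around the origin, i.e. it shrinks to the point $0$, which stays at Euclidean (hence spherical) distance $1$ from the rescaled $z_n$. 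The rescaled complement of $K$, $K(\delta_2^{(n)})$ rescaled, either runs off to infinity or, if $p_{j_0}\in K$ were allowed — it is not, since $p_{j_0}\in\Omega$ — it would also shrink; either way it does not interfere with a fixed neighborhood of the rescaled $z_n$.

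Next I would transfer the bound on $S$. If $S_{\Lambda_n}(z_n)\geq\epsilon$, then there is $\psi_n\in\mathrm{Slit}$-normalized or just injective holomorphic $\psi_n:\Lambda_n\to\triangle$, $\psi_n(z_n)=0$, with $B_\epsilon(0)\subset\psi_n(\Lambda_n)$. Conjugating $\psi_n$ by the rescaling and by the Möbius normalization $\zeta\mapsto(\zeta-z_n')/(1-\overline{z_n'}\zeta)$ that sends the rescaled $z_n'$ to $0$, I get injective holomorphic maps of the rescaled domains $\widetilde\Lambda_n$ into $\triangle$, fixing $0$, still containing a definite ball in the image (the Möbius renormalization distorts radii by a controlled factor since $|z_n'|=1$ relative to the rescaled picture is bounded away from the boundary — here one must be a little careful, but the Koebe-type estimates used elsewhere in the paper handle it). The rescaled domains $\widetilde\Lambda_n$ converge strongly to the punctured disk (or punctured plane) $\mathbb C^\ast$ or $\triangle^\ast$ — the key is that a complementary component has degenerated to a single point at $0$, which is at bounded nonzero distance from the base point. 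By the extremal characterization, or directly by the normal-families argument of Lemma~\ref{points}: the limit map $\varphi$ is injective holomorphic on a domain having $0$ as an isolated boundary point which is a single point, with $\varphi(\text{basepoint})=0$; but then $\varphi$ extends across that puncture, forcing $\varphi$ of a sequence approaching the puncture to approach $\varphi(0)$, contradicting the fixed ball $B_\epsilon(0)$ in the image — equivalently $S=0$ at an isolated point-component, which is exactly extreme (2) in the Introduction. This yields the desired contradiction.

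The main obstacle is the normalization bookkeeping: one must verify that after translating, rescaling by $1/\delta_1^{(n)}$, and applying the Möbius automorphism of $\triangle$ recentering at $z_n'$, the base points stay at uniformly positive Poincaré distance from the degenerating component $E_n^{(j_0)}$, so that the limit is a genuine punctured domain rather than the component escaping to the boundary or colliding with the base point; this is precisely the role played by the sublemma in Lemma~\ref{uniform}, and I would invoke an analogous Poincaré-distance lower bound here, using $\delta_2^{(n)}\ll\delta_1^{(n)}$ and that $p_{j_0}\in\Omega$ keeps $K(\delta_2^{(n)})$ away from the relevant scale. Once that separation is in hand, strong convergence plus Lemma~\ref{tentative} (or the elementary extension argument of Lemma~\ref{points}) closes the proof, and the remark that removing sets close to the boundary does not disturb bounds is what lets us ignore the other components $E_n^{(j)}$, $j\ne j_0$, and the set $K(\delta_2^{(n)})$.
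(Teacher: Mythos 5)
Your overall strategy (argue by contradiction, rescale so the degenerating component near $p_{j_0}$ becomes a puncture, pass to a limit domain) is plausible, but the final contradiction step is not sound as written, and the tools you invoke do not apply in the rescaled limit.

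The core problem is the sentence claiming that, after extending the limit map $\varphi$ across the puncture at $0$, ``$\varphi$ of a sequence approaching the puncture approaches $\varphi(0)$, contradicting the fixed ball $B_\epsilon(0)$ in the image.'' In Lemma~\ref{points} this kind of argument works because the \emph{base points} $z_j$ converge to the puncture $p_1$, so $\varphi(z_j)=0$ forces $\varphi(p_1)=0$ and the extended map collides with its own value. But in your rescaled picture the base point $\widetilde z_n$ sits on the unit circle, i.e.\ at distance exactly $1$ from the puncture $0$; it does \emph{not} approach $0$. Extending $\varphi$ across $0$ therefore gives no collision and no contradiction. Similarly, Lemma~\ref{tentative} cannot be invoked here: after rescaling by $1/\delta_1^{(n)}$ with $\delta_1^{(n)}\to 0$, the complement of $\widetilde\Lambda_n$ Hausdorff-converges to a subset of $\{0,\infty\}$, so the limit domain is (at best) $\mathbb C^\ast$, which has a two-point complement; Lemma~\ref{tentative} explicitly requires the limit complement to contain at least three points, and moreover $\mathbb C^\ast$ is not hyperbolic and admits no bounded injective holomorphic function, so the normal-families limit is \emph{constant}, not an injection. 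The contradiction you actually need at that stage is a different one: from $\varphi_n\to \mathrm{const}$ locally uniformly on $\mathbb C^\ast$ together with $B_\epsilon(0)\subset\varphi_n(\widetilde\Lambda_n)$, one should observe that the image of the nontrivial loop $\{|w|=1\}$ in $\widetilde\Lambda_n$ shrinks into $B_\epsilon(0)$, which is simply connected, contradicting nontriviality (and incidentally this needs the implicit hypothesis that the loop $|z-p_j|=\delta_1$ is nontrivial in $\Lambda$, which you never address). As written, your proof stops short of this and the claimed contradiction is the wrong one.

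It is also worth saying that the paper's proof takes an entirely different and more economical route. No rescaling, no normal families, no strong limits: it fixes $\delta_1$ so small that the loop $\Gamma_j=\{|z-p_j|=\delta_1\}$ has Kobayashi length $<\mu$ in $\Omega'=\Omega\setminus\{p_1,\dots,p_m\}$, then uses universal covering maps $f_\theta:\triangle\to\Omega'$ and the fact that any admissible $\Lambda$ contains the compact set $\bigcup_\theta f_\theta(\overline{\triangle_r})$ to transfer the bound $g^\Lambda_K\le (1/r)g^{\Omega'}_K$ uniformly along $\Gamma_j$. Finally, if $S_\Lambda(z)=s$ at a point of $\Gamma_j$, the image of $\Gamma_j$ under the extremal squeezing map is a nontrivial loop through $0$ in a domain containing $B_s(0)$, hence has Kobayashi length at least $\log\frac{1+s}{1-s}$, giving $s\le\frac{e^\mu-1}{e^\mu+1}$. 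This avoids all the degeneracy bookkeeping (the puncture collapsing, $\mathbb C^\ast$ failing to be hyperbolic, the M\"obius renormalization distortion you flag but don't resolve) that makes your version fragile. If you want to keep your rescaling framework, you should replace the extension-across-the-puncture step by the loop-length argument sketched above; otherwise, the direct covering-map estimate is cleaner.
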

\begin{proof}
Let $0<\mu<<1$ (to be determined).   Fix $\delta_1>0$ such that the Kobayashi length  in $\Omega'=\Omega\setminus\{p_1,...,p_m\}$
of each loop $|z-p_j|=\delta_1$ is strictly less than $\mu$.  Let $f_\theta:\triangle\rightarrow\Omega'$ be a continuous family of universal covering maps with $f_\theta(0)=p_1+\delta_1e^{i\theta}$.
Then the Kobayashi metric $g^{\Omega'}_K(p_1 + \delta_1e^{i\theta})$ is equal to $1/|f_\theta'(0)|$.
Fix any $0<r<1$.   Then for any domain $\Omega''\subset\mathbb P^1$ that covers 
the union $\cup_\theta f_\theta(\overline{\triangle_r)}$, which is a compact subset of $\Omega'$, 
we have that $g^{\Omega''}_K(p_1 + \delta_1e^{i\theta})$ is bounded 
from above by $1/|r\cdot f_\theta'(0)|$.  So for $r$ sufficiently close to $1$ the Kobayashi 
length of the loop $|z-p_1|=\delta_1$ in $\Omega''$ is less than $\mu$ for any such domain.  
The same argument may be applied to all points $p_j$.  \

Now for any such domain $\Omega''$ we estimate the squeezing function 
with respect to $\mu$.   Write $S_{\Omega''}(p_j+\delta_1e^{i\theta})=s$, let 
$g:\Omega''\rightarrow\triangle$ be a map that realises the squeezing function 
at $p_j+\delta_1e^{i\theta}$, and let $\Gamma_j$ denote the loop $|z-p_j|=\delta_1$.
Then, since $g(\Gamma_j)$ is a nontrivial loop in $g(\Omega'')$ we have that 
$l_K(\Gamma_j)\geq\log (\frac{1+s}{1-s})$.  Then $s\leq\frac{e^\mu - 1}{e^\mu+1}\rightarrow 0$
as $\mu\rightarrow 0$, and so the lemma follows. 
\end{proof}

\section{Julia sets for quadratic polynomials - Proof of Theorem \ref{thm3}}

Fix a quadratic polynomial $f_c(z)=z^2+c$ and assume that $c\notin\mathcal M$, where $\mathcal M$ denotes 
the Mandelbrot set.  Then the critical point $0$ is in the basin of attraction of infinity $\Omega_\infty$, 
and the Julia set $\mathcal J_c=\mathbb P^1\setminus\Omega_\infty$ is a Cantor set.   We let 
$G_c(z)$ denote the negative Green's function associated to $f_c$.   It satisfies the following properties:
\begin{itemize}
\item[(1)] $G_c$ is continuous on $\mathbb C$ and harmonic on $\mathbb C\setminus\mathcal J_c$,
\item[(2)] $G_c(z) = -\log |z| + O(1)$ near $\infty$, and
\item[(3)] $G_c(f^n(z)) = 2^{n}G_c(z)$ for all $z\in\mathbb C$.
\end{itemize}
We regard $G_c$ as an exhaustion function of $\Omega_\infty$.  Let $t_0=G_c(0)$.
The exhaustion may be described as follows.  For $t<t_0$ the level 
sets $\Gamma_t=\{G_c=t\}$ are smooth connected embeddings of $S^1$, shrinking around 
infinity as $t$ decreases to $-\infty$.  Considering the picture in $\mathbb C$, as $t$ increases 
to $t_0$, the family $\Gamma_t$ is a decreasing family of embedded $S^1$'s, decreasing 
to $\Gamma_{t_0}$, which is a figure eight, the origin being the figure eight crossing point.  
In general, the level sets $\Gamma_{2^{-n}t_0}$ consists of $2^n$
pairwise disjoint figure eights, and for $2^{-n}t_0<t<2^{-n+1}t_0$ the level set 
$\Gamma_t$ consists of $2^{n+1}$ disjoint smoothly embedded copies of $S^1$, 
one contained in each hole of a figure eight in $\Gamma_{2^{-n}t_0}$.  \

We now assume to get a  contradiction that there exists a bounded holomorphic injection $\varphi:\Omega_\infty\rightarrow\triangle$, 
and we may assume that $\varphi(\infty)=0$.  We will first use the exhaustion just described 
to get a description of $\varphi(\Omega_\infty)$ that will allow us to modify $\varphi$ in a useful way.   Set $H=G_c \circ \varphi^{-1}$, defined 
on $\varphi(\Omega_\infty)$.  \

Start by choosing $s_0<<0$ and let $D_0$ be the disk bounded by $\gamma_{s_0}=\{H=s_0\}$, a single closed loop.  
Increasing $s$ between $s_0$ and $t_0$ we get an increasing family of single loops $\gamma_s$, but when 
$s$ crosses the critical value $t_0$ it breaks into two loops, say $\gamma_{s_1}^1,\gamma_{s_1}^2$, for $s$ close to $t_0$.  
One of these loops is going to enclose the other, and we relabel it $\gamma_{s_1}$.  Next, increasing $s$ between $s_1$ and 
$2t_0$ we follow a path of loops starting from $\gamma_{s_1}$, until $s$ crosses $2t_0$, and it again 
breaks into two loops, say $\gamma_{s_2}^1$ and $\gamma_{s_2}^2$ for $s_2$ close to $2t_0$.  Again, 
single out the one enclosing the other, and relabel it $\gamma_{s_2}$.  Continuing in this fashion, 
we obtain a family of loops $\gamma_{s_j}$ such that $\gamma_{s_j}$ encloses $\gamma_{s_{j-1}}$, and 
such that the disk $D_j$ bounded by $\gamma_{s_j}$ contains the whole sublevel set $\{H<s_j\}$.  We 
have that $\{D_j\}$ is an increasing family of disk,  we denote by $D$ its increasing union, and
we let $\psi:D\rightarrow\triangle$ be the Riemann map satisfying $\psi(0)=0, \psi'(0)>0$.  Our modified 
map will be $\tilde\varphi:=\psi\circ\varphi$.   \ 

Next we will use the map $f_c$ to find some other loops $\tilde\gamma_j$ in $\Omega_\infty$, each 
one in the same free homotopy class as $\varphi^{-1}(\gamma_{s_j})$.  Start by defining $\tilde\gamma_0$
as the level set $G_c=t$ for some $t<t_0$ close to $t_0$.  Then $f_c^{-1}(\tilde\gamma_0)$
consists of two disjoint loops, one of them free homotopic to $\varphi^{-1}(\gamma_{s_1})$.  
Single this out, and label it $\tilde\gamma_1$.  Next $f_c^{-1}(\tilde\gamma_1)$ consists 
of two disjoint loops, and one of them is free homotopic to $\varphi^{-1}(\gamma_{s_2})$. 
Single it out, and denote it by $\tilde\gamma_2$.  Continue in this fashion indefinitely.  \

We are now ready to reach the contradiction.   On the one hand, since the family $\tilde\varphi(\tilde\gamma_j)$
will increase towards $b\triangle$, it follows that the Kobayashi lengths of $\tilde\gamma_j$ in $ \Omega_\infty$
will increase towards infinity.   On the other hand, let $C\subset\Omega_\infty$ denote the forward 
and backward orbit of the critical point $0$.   Then the Kobayashi length of each $\tilde\gamma_j$ in $\Omega_\infty\setminus C$
is longer than the Kobayashi length in $\Omega_\infty$.  But $f_c:\Omega_\infty\setminus C\rightarrow\Omega_\infty\setminus C$
is a covering map, and so the Kobayashi lengths of all the $\tilde\gamma_j$'s in $\Omega_\infty\setminus C$ are the same.  
A contradiction. 

%
%
%
%
%
%
%
%
%
%
%
%
%

\bibliographystyle{amsplain}

\end{document}